\documentclass[11pt]{article}
\usepackage{a4,amsmath,amsfonts,amssymb,epsfig}
\usepackage{color,epsf}
\usepackage{latexsym}
\usepackage{graphicx}
\usepackage{amsthm}

\usepackage{natbib}

\usepackage{stix}

\usepackage{subfig}
\usepackage{alltt}
\usepackage{orcidlink}
\usepackage{mathtools}

\newcommand{\e}{\ensuremath{\mathrm{e}}}
\newcommand{\ad}{\ensuremath{\mathrm{ad}}}

\newtheorem{lemma}{Lemma}[section]

\newtheorem{theorem}[lemma]{Theorem}
\newtheorem{proposition}[lemma]{Proposition}
\newtheorem{corollary}[lemma]{Corollary}
\newtheorem{remark}[lemma]{Remark}

\begin{document}

\title{Error bounds for splitting methods in unitary problems}

\author{F. Casas$^{1 \orcidlink{0000-0002-6445-279X}}$, A. Murua$^{2 \orcidlink{0000-0002-3430-9642}}$ 
  \\[2ex]
$^{1}$ {\small\it Departament de Matem\`atiques and IMAC, Universitat Jaume I}, \\{\small\it 12071-Castell\'on, Spain}\\{
\small\it Email: Fernando.Casas@mat.uji.es}\\[1ex]
$^{2}$ {\small\it Konputazio Zientziak eta A.A. Saila, Informatika Fakultatea, EHU/UPV}, \\
 {\small\it Donostia/San Sebasti\'an, Spain}\\{
\small\it Email: Ander.Murua@ehu.es}\\[1ex]
}

\maketitle

\begin{abstract}

Splitting methods constitute a widely used class of numerical integrators for ordinary and partial differential equations, particularly well suited to problems that can be decomposed into simpler subproblems. High-order splitting schemes are available that achieve high accuracy while preserving key qualitative properties of the underlying dynamical system, and are successfully used across a broad range of fields. In this work, we present a systematic analysis of both local and global errors arising from arbitrary splitting methods applied to unitary problems. Two complementary types of error estimates are derived. The first is expressed in terms of operator norms, while the second is formulated using norms of commutators and can, under suitable assumptions, be extended to certain classes of unbounded operators. Special attention is devoted to the case where only two operators are involved. The theoretical results are illustrated by deriving explicit error bounds for some representative schemes.

\end{abstract}

\bigskip

\textbf{Keywords:} Splitting methods, error bounds, commutator-scaling property.

\section{Introduction}

Splitting methods constitute a natural option for getting approximations to the solution of differential equations when they
can be subdivided into different sub-problems easier to solve than the original system \cite{mclachlan02sm}. 
Their range of applications covers a wide spectrum:
numerical analysis of differential equations, sampling of probability distributions in statistics and molecular dynamics
with Hamiltonian Monte Carlo methods, Vlasov--Poisson equations in plasma physics and quantum Monte Carlo methods, 
to name just a few \cite{blanes24smf}. They are also popular in the simulation of quantum systems in digital quantum computers, where they are called
\emph{product formulas} \cite{childs19nol,chen22epf}, and constitute a paradigmatic example of geometric numerical integrators \cite{hairer06gni}, since they preserve by construction structural
properties of the exact solution, such as symplecticity (in classical Hamiltonian dynamics) and unitarity (in quantum evolution problems) \cite{blanes25aci}.

To establish the framework of our discussion, we start by considering the linear equation
\begin{equation}  \label{basic_eq}
  \frac{du}{dt} =  A u = \sum_{j = 1}^N A_j  \, u, \qquad u(0) = u_0, 
\end{equation}
where $A_1, \ldots, A_N$ are bounded linear operators acting on a Hilbert space $\mathcal{H}$, $u(t) \in \mathcal{H}$ for all $t$ and
$\|A_j\| = a_j < \infty$ for $j=1,2,\ldots,N$. 

The exact solution of \eqref{basic_eq}, $u(t) = \e^{t (A_1 + \cdots + A_N)} u_0$, is supposed to be well defined for all $t \ge 0$. Associated
with equation \eqref{basic_eq}, we have
\begin{equation} \label{eq_op1}
  \frac{d }{dt} U(t) = A \, U(t), \qquad U(0) = I,
\end{equation}
in the sense that $u(t) = U(t) u_0$ and
\[
    U(t) = \e^{t (A_1 + \cdots + A_N)}
\]
satisfies \eqref{eq_op1}. It often happens that evaluating the action of the evolution operator $U(t)$ on $u_0$ is a difficult or computationally expensive task,
whereas computing each $\e^{t A_j}$ separately is much more feasible. In that case $u(t)$ can be approximated  by
$\e^{t A_1}  \e^{t A_2} \cdots \e^{t A_N} u_0$. In practice, however, this approximation performs poorly unless $t > 0$
is sufficiently small. Therefore, when the goal is to approximate the solution of \eqref{basic_eq} at a not-so-small final time $t = t_f$
the standard approach is to divide the interval $[0, t_f]$ into $K$ subintervals of length $h$,
with $K h = t_f$, and then compute the sequence
\begin{equation} \label{lie-trotter}
  u_{k+1} =  \e^{h A_1}  \e^{h A_2} \cdots \e^{h A_N} \, u_k, \qquad k \ge 0, 
\end{equation}  
(or any other permutation of the operators $A_j$)
so that $u_{k+1} \approx u(t_{k+1})$ with $t_{k+1} = (k+1)h$.  This corresponds to the Lie--Trotter scheme, and a direct calculation shows that
\[
  \e^{h A_1}  \e^{h A_2} \cdots \e^{h A_N} - \e^{h (A_1 + \cdots + A_N)} = \mathcal{O}(h^2) \qquad \mbox{ as } \qquad h \rightarrow 0,
\]
so that it is of first order of accuracy.  A higher-order approximation can be achieved with a symmetrized version of \eqref{lie-trotter},
\begin{equation} \label{strang}
  S_2(h) = \e^{\frac{1}{2} h A_1} \,  \e^{\frac{1}{2} h A_2} \, \cdots \,  \e^{h A_N} \, \cdots \e^{\frac{1}{2} h A_2} \, \e^{\frac{1}{2} h A_1}.
\end{equation}
This corresponds to the Strang splitting, and verifies
\[
     S_2(h) - \e^{h (A_1 + \cdots + A_N)} = \mathcal{O}(h^3) \qquad \mbox{ as } \qquad h \rightarrow 0.
\]
The more general composition
\begin{equation} \label{gen.split}
  \Psi(h) = \prod_{j=1}^s \e^{h \, \alpha_N^{(j)} A_N} \, \cdots \, \e^{h \, \alpha_2^{(j)} A_2}  \, \e^{h \alpha_1^{(j)} A_1},
\end{equation}
can achieve a given order $p$ 
if the number $s$ and the coefficients $ \alpha_i^{(j)}$ are appropriately chosen so that 
\begin{equation} \label{diff1}
     \Psi(h) - \e^{h (A_1 + \cdots +A_N)} = \mathcal{O}(h^{p+1}) \qquad \mbox{ as } \qquad h \rightarrow 0.
\end{equation}
 In particular, a necessary and sufficient condition for order $p= 1$ is
\begin{equation}
\label{eq:consistency}
	\sum_{j=1}^s \alpha_i^{(j)}=1, \qquad 1 \leq i \leq N.
\end{equation}
For $p>1$, the parameters $ \alpha_i^{(j)}$ have to satisfy an additional set of algebraic equations (the so-called order conditions) whose
number and complexity growths exponentially with $p$ \cite{blanes24smf}. For this reason, very often 
compositions of the basic Lie--Trotter scheme \eqref{lie-trotter} or the Strang splitting \eqref{strang} are considered instead of the direct approach \eqref{gen.split}, since
then the number of order conditions is significantly reduced.  More specifically:

\begin{itemize}
\item \textbf{Compositions of the basic Lie--Trotter scheme:}  
Let $\chi(h)$ denote the basic Lie--Trotter scheme:
\begin{equation}
\label{eq:chi}
\chi(h) = \mathrm{e}^{h A_1} \cdots \mathrm{e}^{h A_N},
\end{equation}
and $\chi^*(h)$ its adjoint:
\begin{equation}
\label{eq:chi*}
 \chi^*(h) = \chi(-h)^{-1} = \mathrm{e}^{h A_N} \cdots \mathrm{e}^{h A_1},
\end{equation}
both of which provide first-order approximations. One can then construct higher-order methods by taking compositions of the form
\begin{equation} \label{eq:Psi_alpha}
 \Psi(h) = \chi(\alpha_{2s} h) \, \chi^*(\alpha_{2s-1} h) \cdots \chi(\alpha_2 h) \, \chi^*(\alpha_1 h),
\end{equation}
where the coefficients $\alpha_1, \ldots, \alpha_{2s} \in \mathbb{R}$ are appropriately chosen so as to satisfy the required order conditions.

\item \textbf{Compositions of the Strang splitting:}  
Define
\begin{eqnarray} \label{eq:comp_Strang}
  \Psi(h) & =  &\chi\left(\frac{\gamma_s}{2} h\right) \chi^*\left(\frac{\gamma_s}{2} h\right) \cdots \chi\left(\frac{\gamma_1}{2} h\right) \chi^*\left(\frac{\gamma_1}{2} h\right)\nonumber \\
     & = & S_2(\gamma_s h) \cdots S_2(\gamma_1 h),
\end{eqnarray}
with $\gamma_1, \ldots, \gamma_s \in \mathbb{R}$.  
This is a particular case of composition \eqref{eq:Psi_alpha}, with coefficients $\alpha_1, \ldots, \alpha_{2s}$  given by
\[
   (\alpha_1, \alpha_2, \ldots, \alpha_{2s-1}, \alpha_{2s}) = \left( \frac{\gamma_1}{2}, \frac{\gamma_1}{2}, \ldots, \frac{\gamma_s}{2}, \frac{\gamma_s}{2} \right).
\]
\end{itemize}

The well-known triple-jump \cite{yoshida90coh} and quintuple-jump \cite{suzuki91gto} methods of arbitrary order $p$ are specific instances of  \eqref{eq:comp_Strang}. However, for orders $p \ge 6$, alternative choices of coefficients lead to more efficient schemes~\cite{blanes24smf}. These are typically obtained
by minimizing in a certain sense the main term in the local truncation error, that is, the term multiplying $h^{p+1}$ in \eqref{diff1}. In the analysis, 
terms multiplying $h^q$ with $q > p+1$ are generally ignored based on the implicit assumption that, for sufficiently small $h$,  their contribution to the error can be neglected
in comparison with the leading error term. Nevertheless, there are relevant physical examples where, for a fixed $h$, this is no longer the case. These
include near-neighbor lattice Hamiltonians \cite{childs19nol} and some quantum chemistry systems \cite{wecker14gce}. It is then much more helpful
to have rigorous (and ideally sharp) bounds for the error committed both when approximating $\e^{h A}$ with $\Psi(h)$, that is, for
\[
  \mathcal{E}_{\ell} \equiv \|\Psi(h) - \e^{h\, (A_1+\cdots+A_N)} \|
\]
and for the global error after $k$ iterations of the map, 
\[
  \mathcal{E}_g = \|\Psi(h)^k - \e^{k h \, (A_1+\cdots+A_N)} \|
\]
for an arbitrary positive integer $k$. Having explicit bounds for $\mathcal{E}_{\ell}$ and  $\mathcal{E}_g$ is indeed of interest in several contexts. 
In particular, in the
numerical analysis of differential equations, they can be used either to analyze the relative efficiency of different integrators by comparing
the respective bounds obtained or to construct new schemes by minimizing the bounds. Also,  in the 
Hamiltonian simulation in quantum computers by product formulas, minimizing the bound may lead to a reduction in complexity of the required
quantum circuit implementing the simulation \cite{childs21tot}.

Previous analyses of error bounds for splitting methods go back to the work of Suzuki \cite{suzuki85dfo}, where optimal bounds were obtained for the 
Lie--Trotter \eqref{lie-trotter} and Strang splitting \eqref{strang} in terms of the norm of commutators of the $A_j$ operators. The analysis for the
Strang splitting was later extended  to unbounded operators in \cite{jahnke00ebf} and  to splitting methods
of arbitrary order $p$ in \cite{thalhammer08hoe,descombes10ael}. An elementary approach for low-order schemes can be found in \cite{iserles24aea}, whereas in \cite{childs21tot} a general theory is presented leading to estimates of the form
\[
   \mathcal{E}_{\ell} = \mathcal{O} \left(  \tilde{\alpha}_{\mathrm{comm}} \, t^{p+1} \, \e^{4 t s \sum_{j=1}^N a_j}\right), 
\]
for a splitting method \eqref{gen.split} of order $p$ and $s$ stages. Here
$\tilde{\alpha}_{\mathrm{comm}}$ are bounds on the norm of nested commutators involving $p+1$ operators $A_j$. 

In this work we carry out a systematic treatment of the error committed by splitting methods of arbitrary order $p$, providing two types of estimates.
The first type is formulated in terms of the norm $a_j$ of the operators $A_j$ involved. 
In the second, we get explicit bounds depending on the norm of commutators
of the operators, generalizing the approach of  \cite{iserles24aea} and sharpening the analysis of \cite{childs21tot}. These results are
particularly relevant when the operators nearly commute and can in principle be generalized, under certain assumptions, to unbounded operators.

For simplicity, in the rest of the paper we will assume that the operators $A_j$ in \eqref{basic_eq} are skew-adjoint, that is, $A_j^{\dagger} \equiv A_j^*
= -A_j$, so that 
$U(t) = \e^{t A}$ is a unitary operator for $t \in \mathbb{R}$, $U^{*}(t) U(t) = I$, and $\|U(t)\| = 1$. This encompasses the important case
of quantum Hamiltonian problems.

\section{Error bounds depending on the norm of the operators}
\label{sec.2}

Our approach for obtaining error bounds in terms of the norm of the operators $A_j$ consists in identifying a suitable representation for both the exact solution of eq. \eqref{basic_eq}
and the numerical approximation  rendered by the splitting method,
and then estimating the difference between them. We first establish a general result and then apply it to several
types of compositions.


\subsection{General estimates of the local error}

The most general splitting scheme \eqref{gen.split} can be written in the form
\begin{equation}
\label{eq:expB}
\Psi(h) = \mathrm{e}^{h C_r} \cdots \mathrm{e}^{h C_1} = I + \sum_{n \geq 1} h^n \sum_{j_1+\cdots+j_r = n} \frac{C_r^{j_r} \cdots C_1^{j_1}}{j_r! \cdots j_1!},
\end{equation}
with $r= s N$, and
\[
C_{i + N (j-1)} = \alpha_i^{(j)} \, A_i, \qquad 1\leq i \leq N, \qquad 1\leq j \leq s.
\]

If the consistency condition \eqref{eq:consistency} holds, then 
$C_1 + \cdots + C_r = A_1 + \cdots + A_N$, and the local error $\Psi(h) - \mathrm{e}^{h(A_1 + \cdots + A_N)}$ of the scheme can be written simply as
\begin{equation*}
\mathrm{e}^{h C_r} \cdots \mathrm{e}^{h C_1} - \mathrm{e}^{h (C_r+\cdots+C_1)}.
\end{equation*}

\begin{proposition}
\label{p:boundsB}
If $C_1, \ldots, C_r$ are bounded skew-adjoint operators, then, for all $q \geq 1$,
\begin{equation}
\label{eq:expB2}
\mathrm{e}^{h C_r} \cdots \mathrm{e}^{h C_1} = I + \sum_{n=1}^{q} h^n \sum_{j_1+\cdots+j_r = n} \frac{C_r^{j_r} \cdots C_1^{j_1}}{j_r! \cdots j_1!} + h^{q+1} \mathcal{R}_{q+1}(h),
\end{equation}
where
\begin{equation*}
\|\mathcal{R}_{q+1}(h)\| \leq \frac{\big(\|C_1\| + \cdots + \|C_r\| \big)^{q+1}}{(q+1)!}.
\end{equation*}
\end{proposition}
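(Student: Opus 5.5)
The bound cannot come from simply bounding the tail of the power series in \eqref{eq:expB} term by term. That would give $\sum_{n>q}h^{n-q-1}c^n/n!$ with $c=\|C_1\|+\cdots+\|C_r\|$, which depends on $h$ and is not bounded by $c^{q+1}/(q+1)!$ uniformly in $h$. The uniform bound has to come from skew-adjointness, through $\|\e^{\tau C_i}\|=1$ for all real $\tau$. The plan is therefore to write the remainder as an explicit iterated integral in which only unitary factors appear besides $q+1$ factors $C_i$.

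\textbf{Step 1: one-factor Taylor expansion with integral remainder.} For a single skew-adjoint $C$,
\[
\e^{hC}=\sum_{m=0}^{k-1}\frac{h^mC^m}{m!}+h^k\,C^k\int_0^1\frac{(1-\sigma)^{k-1}}{(k-1)!}\,\e^{\sigma hC}\,d\sigma .
\]
The integral factor has norm at most $1/k!$ because $\|\e^{\sigma hC}\|=1$.

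\textbf{Step 2: expand the product by induction on $r$.} Set $P_r=\e^{hC_r}\cdots\e^{hC_1}$ and $P_r=\e^{hC_r}P_{r-1}$. By the induction hypothesis, $P_{r-1}=\sum_{n\le q}h^nT^{(r-1)}_n+\sum_{k}h^{k}(\text{remainder terms})$, where $T_n$ denotes the degree-$n$ homogeneous part. The natural object to induct on is a structural statement rather than a norm bound.

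\emph{Claim.} $\mathcal{R}_{q+1}(h)$ is a finite sum, over multi-indices $(j_r,\dots,j_1)$ with $j_1+\cdots+j_r=q+1$, of terms of the form
\[
\Big(\prod_{i} C_i^{j_i}\Big)\text{-words interleaved with unitary factors }\e^{\sigma hC_i},\qquad \text{integrated against nonnegative weights of total mass } \frac{1}{j_r!\cdots j_1!}.
\]

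Concretely, this comes from the following rule. Look for the last index $i$ (scanning from $C_1$ upward) at which the accumulated degree reaches $q+1$. Expand $\e^{hC_{i'}}$ exactly to order $q+1-(\text{degree so far})$, using the Step 1 remainder at index $i$. Leave all higher factors $\e^{hC_{i'}}$ with $i'>i$ unexpanded, since they are unitary.

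Explicitly, write $P_r=\sum_{\text{degree}\le q}+\sum_{i=1}^r \e^{hC_r}\cdots\e^{hC_{i+1}}\,R^{(i)}$. Here $R^{(i)}$ collects the terms where index $i$ carries the Step 1 remainder of order $k_i=q+1-(j_1+\cdots+j_{i-1})\ge1$, multiplied by the monomials $C_{i-1}^{j_{i-1}}\cdots C_1^{j_1}/(j_{i-1}!\cdots j_1!)$ with $j_1+\cdots+j_{i-1}\le q$.

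Verifying this telescoping identity is routine. Apply $\e^{hC}=T_{<k}+h^kR_k$ successively from $C_1$ upward and regroup.

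\textbf{Step 3: norm estimate.} Each remainder term is bounded by
\[
\prod_{i'\le i}\frac{\|C_{i'}\|^{j_{i'}}}{j_{i'}!}\quad\text{with } j_i=k_i,
\]
because the leading unitaries have norm $1$. Summing over all multi-indices with total degree exactly $q+1$ (each appears exactly once, indexed by its last nonzero position $i$) gives at most
\[
\sum_{j_1+\cdots+j_r=q+1}\prod_{i}\frac{\|C_i\|^{j_i}}{j_i!}=\frac{c^{q+1}}{(q+1)!}
\]
by the multinomial theorem.

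The main obstacle is the bookkeeping in Step 2: checking that the remainder terms are indexed bijectively by multi-indices of total degree $q+1$ with $j_i\ge1$ at the top index, and that no multi-index is double counted. I would verify this by induction on $r$, with the case $r=1$ being exactly Step 1.
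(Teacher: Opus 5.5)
Your proof is correct, and it takes a different route from the paper's.

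The paper realizes $\e^{hC_r}\cdots\e^{hC_1}$ as $Y(r)$ for the non-autonomous problem $Y'=hA(\tau)Y$, with the piecewise constant generator $A(\tau)=C_j$ on $[j-1,j]$. It writes the Dyson (Picard) expansion with remainder $\int_{\tau_1\ge\cdots\ge\tau_{q+1}}A(\tau_1)\cdots A(\tau_{q+1})\,Y(\tau_{q+1})$ and discards $Y(\tau_{q+1})$ by unitarity. It then evaluates the resulting scalar iterated integral by comparison with $y'=h\|A(\tau)\|y$, whose solution at $\tau=r$ is $\e^{h(\|C_1\|+\cdots+\|C_r\|)}$.

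You instead Taylor-expand each factor with integral remainder and telescope from $C_1$ upward. This turns the remainder into an explicit sum over multi-indices of total degree $q+1$, indexed by their last nonzero entry. The multinomial theorem then plays the role of the scalar ODE comparison. The bookkeeping you flag (induction on $r$, the bijection with multi-indices, no double counting) does go through.

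The two remainders are in fact mirror images. Split the paper's simplex integral according to which intervals $[j-1,j]$ contain the $\tau_k$. This gives the same kind of sum, but with the monomials $C_r^{j_r}\cdots C_i^{j_i}$ on the left, where $i$ is the first nonzero entry, and the unitary factor $\e^{\sigma hC_i}\e^{hC_{i-1}}\cdots\e^{hC_1}$ on the right.

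The paper's version is shorter, avoids the combinatorial bookkeeping, and works verbatim for any skew-adjoint generator $A(\tau)$, not only piecewise constant ones. Yours is fully elementary, needs no ODE theory, and yields a concrete closed formula for $\mathcal{R}_{q+1}(h)$.

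One small wording slip: in Step 2 the index you single out is the \emph{first} one at which the cumulative degree $j_1+\cdots+j_i$ reaches $q+1$, not the ``last'' one. This is equivalently the last nonzero entry, which is what you correctly use in Step 3.
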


\begin{proof}
Consider the initial value problem
\begin{equation}
\label{ivp.1}
\frac{d}{d\tau} Y(\tau) = h A(\tau) Y(\tau), \qquad Y(0) = I,
\end{equation}
where
\[
A(\tau) = C_j \quad \text{for } \quad \tau \in [j-1, j], \qquad j=1,\ldots,r.
\]
The solution of \eqref{ivp.1} at $\tau = r$ is
\begin{equation} \label{solu_Y}
Y(r) = \mathrm{e}^{h C_r} \cdots \mathrm{e}^{h C_1}.
\end{equation}
Since $A^\dagger(\tau) = -A(\tau)$, the solution operator $Y(\tau)$ is unitary for all $\tau$ and can be expanded as a power series in $h$ (cf. \cite{blanes24smf}),
\[
Y(\tau) = I + \sum_{n=1}^{q} h^n \int_0^{\tau} \int_0^{\tau_1} \cdots \int_0^{\tau_{n-1}} A(\tau_1) \cdots A(\tau_n) \, d\tau_n \cdots d\tau_1 
 \, + h^{q+1} \hat{\mathcal{R}}_{q+1}(\tau, h),
\]
where for all $n$,
\[
\hat{\mathcal{R}}_n(\tau, h) = \int_0^{\tau} \cdots \int_0^{\tau_{n-1}} A(\tau_1) \cdots A(\tau_n) Y(\tau_n) \, d\tau_n \cdots d\tau_1.
\]
Thus, by expanding the exponentials in \eqref{solu_Y}, we get \eqref{eq:expB2}, with
\[
\mathcal{R}_{q+1}(h) = \hat{\mathcal{R}}_{q+1}(r, h).
\]
Let us determine next a bound for $\mathcal{R}_n(h)$.  Since $Y(\tau)$ is unitary, then
\[
\|\hat{\mathcal{R}}_n(\tau,h)\| \leq \int_0^{\tau} \cdots \int_0^{\tau_{n-1}} \|A(\tau_1)\| \cdots \|A(\tau_n)\| \, d\tau_n \cdots d\tau_1.
\]
Define now $a(\tau) = \|A(\tau)\|$, so that
\[
 \|\mathcal{R}_n(h)\|  = \|\hat{\mathcal{R}}_n(r, h)\| \leq \int_0^r \cdots \int_0^{\tau_{n-1}} a(\tau_1) \cdots a(\tau_n) \, d\tau_n \cdots d\tau_1.
\]
Next, consider the scalar initial value problem
\[
\frac{d}{d\tau} y(\tau) = h a(\tau) y(\tau), \qquad y(0) = 1,
\]
whose solution admits the expansion
\begin{equation} \label{expre_1yt}
y(\tau) = 1 + \sum_{n \geq 1} h^n \int_0^\tau \cdots \int_0^{\tau_{n-1}} a(\tau_1) \cdots a(\tau_n) \, d\tau_n \cdots d\tau_1.
\end{equation}
Since
\[
a(\tau) =\|C_j\| \quad \text{ for } \;\; \tau \in [j-1, j], \qquad j = 1, \ldots, r,
\]
we have
\begin{equation} \label{expre_2_yr}
y(r) = \e^{h( \|C_1\|  + \cdots + \|C_r\| )} = 1 + \sum_{n \geq 1} h^n \frac{(\|C_1\| + \cdots + \|C_r\| )^n}{n!}.
\end{equation}
Comparing \eqref{expre_2_yr} with expression \eqref{expre_1yt} evaluated at $\tau = r$, we obtain the bound
\[
\|\mathcal{R}_n(h)\| \leq \frac{(\|C_1\| + \cdots + \|C_r\| )^n}{n!},
\]
as claimed.
\end{proof}

\begin{corollary}
\label{c:expB_le}
Under the assumptions of Proposition~\ref{p:boundsB}, for all $q\geq 1$,
\begin{align*}
 & \|\mathrm{e}^{h C_r} \cdots \mathrm{e}^{h C_1} - \mathrm{e}^{h (C_1+\cdots + C_r)}\| \leq 
 \sum_{n=1}^{q} h^n \left\| \sum_{j_1+\cdots+j_r = n} \frac{C_r^{j_r} \cdots C_1^{j_1}}{j_r! \cdots j_1!} 
 - \frac{(C_1+\cdots+C_r)^n}{n!} \right\| \\
 & \qquad\qquad  + \frac{h^{q+1}}{(q+1)!} \left( ( \|C_1\|+\cdots + \|C_r\|)^{q+1} + \|C_1+\cdots + C_r\|^{q+1} \right).
\end{align*}
\end{corollary}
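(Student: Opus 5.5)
The plan is to expand both operators $\mathrm{e}^{h C_r} \cdots \mathrm{e}^{h C_1}$ and $\mathrm{e}^{h(C_1+\cdots+C_r)}$ up to order $q$ with remainders of order $h^{q+1}$, subtract the two expansions, and use the triangle inequality. Both expansions come from Proposition~\ref{p:boundsB}.

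For the product, Proposition~\ref{p:boundsB} gives directly
\[
\mathrm{e}^{h C_r} \cdots \mathrm{e}^{h C_1} = I + \sum_{n=1}^{q} h^n \sum_{j_1+\cdots+j_r = n} \frac{C_r^{j_r} \cdots C_1^{j_1}}{j_r! \cdots j_1!} + h^{q+1} \mathcal{R}_{q+1}(h),
\]
with $\|\mathcal{R}_{q+1}(h)\| \leq (\|C_1\|+\cdots+\|C_r\|)^{q+1}/(q+1)!$.

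For the exact exponential, I apply Proposition~\ref{p:boundsB} again, now with $r=1$ and the single operator $C = C_1+\cdots+C_r$. This $C$ is bounded and skew-adjoint, since it is a sum of bounded skew-adjoint operators. The inner sum then reduces to the single term $C^n/n!$, so
\[
\mathrm{e}^{hC} = I + \sum_{n=1}^q h^n \frac{C^n}{n!} + h^{q+1}\tilde{\mathcal{R}}_{q+1}(h),
\]
with $\|\tilde{\mathcal{R}}_{q+1}(h)\| \leq \|C\|^{q+1}/(q+1)!$.

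Subtracting the two expansions, the identity terms cancel. The difference is then the sum over $n=1,\ldots,q$ of $h^n$ times the bracketed operators in the statement, plus $h^{q+1}(\mathcal{R}_{q+1}(h) - \tilde{\mathcal{R}}_{q+1}(h))$. Applying the triangle inequality term by term, and bounding each remainder by the estimate above, gives exactly the stated inequality. This uses $h \geq 0$, so that $|h^n| = h^n$; otherwise one replaces $h$ by $|h|$.

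There is no real obstacle here. The only point needing care is that Proposition~\ref{p:boundsB} genuinely applies to the single skew-adjoint operator $C_1+\cdots+C_r$. Alternatively, one can bound $\tilde{\mathcal{R}}_{q+1}$ directly from the integral form of the Taylor remainder, using that $\mathrm{e}^{\tau C}$ is unitary.
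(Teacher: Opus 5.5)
Your proof is correct and is the argument the paper intends (it states the corollary without writing out a proof): apply Proposition~\ref{p:boundsB} once to the product, and once with $r=1$ and the single skew-adjoint operator $C_1+\cdots+C_r$ to the exact exponential, then subtract and use the triangle inequality. You are also right that the stated bound implicitly needs $h\ge 0$, or $|h|$ in place of $h$.
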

Application of Corollary~\ref{c:expB_le} to the general splitting scheme \eqref{gen.split} with $r=s\, N$ and
\begin{equation} \label{def_Bs}
C_{i + N (j-1)} = \alpha_i^{(j)} \, A_i, \qquad 1\leq i \leq N, \qquad 1\leq j \leq s,
\end{equation}
leads directly to the following result.

\begin{theorem} 
\label{th_gen.split}
Suppose that the product formula 
\[
    \Psi(h) = \prod_{j=1}^s \e^{h \, \alpha_N^{(j)} A_N} \, \cdots \, \e^{h \, \alpha_2^{(j)} A_2}  \, \e^{h \alpha_1^{(j)} A_1},
\]
with $\alpha_i^{(j)} \in \mathbb{R}$, provides an approximation of order $p$ to $\e^{h(A_1 + \cdots + A_N)}$,
with $A_j$ skew-adjoint operators verifying that $\|A_j\| =  a_j < \infty$ for $j=1,\ldots, N$. Then, the following estimate holds:
\begin{equation*}
  \left\|\Psi(h)-\e^{h \sum_{j=1}^N A_j} \right\| \leq \frac{h^{p+1}}{(p+1)!} \left( \left[\sum_{i=1}^N a_i \right]^{p+1} + \left[ \sum_{i=1}^N a_i \, \left(|\alpha_i^{(1)}|+\cdots+|\alpha_i^{(s)}|\right)\right]^{p+1} \right)
  \end{equation*}
\end{theorem}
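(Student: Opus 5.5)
The plan is to apply Corollary~\ref{c:expB_le} with $q=p$ to the operators $C_1,\ldots,C_r$ of \eqref{def_Bs}, with $r=sN$. The ordering of the factors in $\Psi(h)$ matches $\mathrm{e}^{hC_r}\cdots\mathrm{e}^{hC_1}$: within the $j$-th stage the rightmost factor is $\mathrm{e}^{h\alpha_1^{(j)}A_1}=\mathrm{e}^{hC_{1+N(j-1)}}$, and the stages are multiplied with $j=1$ on the right. The $C_k$ are real multiples of skew-adjoint operators, so they are bounded and skew-adjoint, and Proposition~\ref{p:boundsB} applies. An approximation of order $p\ge 1$ satisfies the consistency condition \eqref{eq:consistency}. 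Hence $C_1+\cdots+C_r=A_1+\cdots+A_N$, and the left-hand side of the theorem is exactly the quantity bounded in the corollary.

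The first step is to show that the finite sum over $n=1,\ldots,p$ in Corollary~\ref{c:expB_le} vanishes. For this, I view both $\Psi(h)$ and $\mathrm{e}^{h(C_1+\cdots+C_r)}$ as entire operator-valued power series in $h$; the series converge in norm because the operators are bounded. The coefficient of $h^n$ in $\Psi(h)$ is $\sum_{j_1+\cdots+j_r=n} C_r^{j_r}\cdots C_1^{j_1}/(j_r!\cdots j_1!)$, and in the exponential it is $(C_1+\cdots+C_r)^n/n!$. Order $p$ means the difference is $\mathcal{O}(h^{p+1})$. Uniqueness of power series coefficients (differentiate at $h=0$) then forces the coefficients to agree for $n\le p$. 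Here I take "order $p$" to mean the asymptotic statement \eqref{diff1} holding for the given operators; equivalently, one may take the order conditions to hold, which gives the same identity of the coefficients.

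The remainder term then gives the bound
\[
\frac{h^{p+1}}{(p+1)!}\left(\Big(\sum_k\|C_k\|\Big)^{p+1}+\|C_1+\cdots+C_r\|^{p+1}\right).
\]
To finish, I use $\sum_{k}\|C_k\|=\sum_{i=1}^N a_i\sum_{j=1}^s|\alpha_i^{(j)}|$, which is the second bracket in the statement. I also use $\|C_1+\cdots+C_r\|=\|A_1+\cdots+A_N\|\le\sum_i a_i$, which is the first bracket.

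The only delicate point is the coefficient-matching step, that is, justifying that order $p$ kills all terms of degree at most $p$ exactly rather than only asymptotically. This follows from analyticity in $h$ of both sides, so I expect no real obstacle. The rest is bookkeeping of indices and norms.
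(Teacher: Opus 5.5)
Your proposal is correct and follows essentially the same route as the paper: apply Corollary~\ref{c:expB_le} with $q=p$ to the operators $C_k$ of \eqref{def_Bs}, identify $C_1+\cdots+C_r$ with $A_1+\cdots+A_N$, use the order-$p$ property to eliminate the terms with $n\le p$, and evaluate $\sum_k\|C_k\|$ and $\|\sum_k C_k\|\le\sum_i a_i$. Your analyticity argument for the coefficient matching makes explicit what the paper compresses into ``the fact that $\Psi(h)$ is of order $p$''. (If order is understood only for the given operators, matching the $h^1$ coefficients still gives $\sum_k C_k=\sum_i A_i$ directly, even if \eqref{eq:consistency} itself were to fail.)
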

\begin{proof}
Taking into account \eqref{def_Bs}, the consistency condition \eqref{eq:consistency}, and the fact that $\Psi(h)$ is of order $p$, we have 
\[
\begin{aligned}
   \left\|\Psi(h)-\e^{h \sum_{j=1}^N A_j} \right\| & = \|\mathrm{e}^{h C_r} \cdots \mathrm{e}^{h C_1} - \mathrm{e}^{h (C_1+\cdots + C_r)}\| \\
   & \le
   \frac{h^{p+1}}{(p+1)!} \left( ( \|C_1\|+\cdots + \|C_r\|)^{p+1} + \|C_1+\cdots + C_r\|^{p+1} \right),
\end{aligned}   
\]   
but $\|C_1+\cdots + C_r\| = \|A_1+\cdots + A_N\| \le a_1 + \cdots + a_N$ and
\[
  \|C_1\|+\cdots + \|C_r\| = \sum_{i=1}^N  |\alpha_i^{(1)}| \, a_i + \cdots + \sum_{i=1}^N  |\alpha_i^{(s)}| \, a_i.
\]  
\end{proof}

\subsection{Estimates for compositions of the Lie--Trotter and Strang schemes}

The previous analysis can be particularized when the product formula is a composition of Lie--Trotter and Strang schemes.
Specifically, composition \eqref{eq:Psi_alpha} can be expressed as a product \eqref{eq:expB} of $r = 2Ns$
bounded operators $C_1, \ldots, C_r$,
\[
  \Psi(h) = \e^{h C_r} \, \cdots \, \e^{h C_1},
\]
with
\[
  C_{(2j-2)N + k} = \alpha_{2j-1} A_k, \qquad  C_{(2j-1)N + k} = \alpha_{2j} A_{N-k+1}, \qquad k=1,\ldots,N, \quad j=1, \ldots, s.
\]
In consequence, the norms $ \| C_j \|$ are given by
\[
\begin{array}{l}
  \| C_{(2j-2)N + k}\| = |\alpha_{2j-1}| \, a_k, \\
    \|C_{(2j-1)N + k}\| = |\alpha_{2j}| \, a_{N-k+1}, 
 \end{array}   
    \qquad j=1, \ldots, s; \;  k=1,\ldots,N,
\]
so that
\[
\sum_{j=1}^r \|C_j\| = \sum_{j=1}^s \sum_{k=1}^N \Big( |\alpha_{2j-1}| \, a_k + |\alpha_{2j}| \, a_{N-k+1} \Big) = 
  \left( \sum_{j=1}^{2s} | \alpha_j| \right) \sum_{k=1}^N a_k.
\]
Therefore, in virtue of Corollary \ref{c:expB_le}, we have the following result.
\begin{theorem} \label{th_compoLT}
Suppose that the $2s$-stage composition 
\[
\Psi(h) = \chi(\alpha_{2s} h) \, \chi^*(\alpha_{2s-1} h) \cdots \chi(\alpha_2 h) \, \chi^*(\alpha_1 h),
\]
with $(\alpha_1, \ldots, \alpha_{2s}) \in \mathbb{R}^{2s}$, and
\[
  \chi(h) = \e^{h A_1} \, \e^{h A_2} \cdots \e^{h A_N}, \qquad \chi^*(h) = \e^{h A_N} \cdots \e^{h A_2} \, \e^{h A_1}, 
\]
 provides an approximation of order $p$ to $\e^{h(A_1 + \cdots + A_N)}$,
with $A_j$ skew-adjoint operators verifying that $\|A_j\| = a_j < \infty$ for $j=1,\ldots, N$. Then, the following estimate holds:
\begin{equation}  \label{eq_a1}
  \left\|\Psi(h)-\e^{h \sum_{j=1}^N A_j} \right\| \le \frac{1}{(p+1)!} \left[ 1 +  \left( \sum_{i=1}^{2s} |\alpha_i| \right)^{p+1} \right] \, \left(h \sum_{j=1}^N a_j \right)^{p+1}.
\end{equation}
\end{theorem}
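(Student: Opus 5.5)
The plan is to apply Corollary~\ref{c:expB_le} with $q=p$ to the representation $\Psi(h)=\e^{hC_r}\cdots\e^{hC_1}$, where $r=2Ns$ and the $C_j$ are the rescaled copies of the $A_k$ listed just before the statement. This proceeds exactly as in the proof of Theorem~\ref{th_gen.split}; the composition is simply a special case of \eqref{gen.split}. First I would verify that $C_1+\cdots+C_r=A_1+\cdots+A_N$. Summing the $C_j$ gives $\big(\sum_{j=1}^{2s}\alpha_j\big)(A_1+\cdots+A_N)$. The statement only assumes order $p\ge 1$, and order one for the composition is equivalent to $\sum_{j=1}^{2s}\alpha_j=1$. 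This follows by comparing the $\mathcal{O}(h)$ terms, since the linear term of $\Psi(h)$ is $h\sum_j\alpha_j\,(A_1+\cdots+A_N)$. Hence $\mathrm{e}^{h(C_1+\cdots+C_r)}=\e^{h\sum_j A_j}$, and the local error is exactly the left-hand side in Corollary~\ref{c:expB_le}.

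Next I would argue that the finite sum over $n=1,\ldots,p$ in Corollary~\ref{c:expB_le} vanishes. For each $n$, the operator $\sum_{j_1+\cdots+j_r=n} C_r^{j_r}\cdots C_1^{j_1}/(j_r!\cdots j_1!)$ is the coefficient of $h^n$ in the expansion \eqref{eq:expB} of $\Psi(h)$. Likewise, $(C_1+\cdots+C_r)^n/n!$ is the coefficient of $h^n$ in $\e^{hA}$. Since $\Psi(h)-\e^{hA}=\mathcal{O}(h^{p+1})$ and both sides are convergent power series in $h$ (the operators are bounded), the coefficients agree for $n\le p$. This coefficient-matching step is the only point needing care: one must read ``order $p$'' as identity of the Taylor coefficients up to degree $p$ as operators, and this holds for analytic operator-valued functions of $h$. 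I do not expect it to be a real obstacle.

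What remains is the remainder term,
\[
\frac{h^{p+1}}{(p+1)!}\Big( (\|C_1\|+\cdots+\|C_r\|)^{p+1}+\|A_1+\cdots+A_N\|^{p+1}\Big).
\]
I would insert the computation preceding the statement, $\sum_{j=1}^r\|C_j\|=\big(\sum_{i=1}^{2s}|\alpha_i|\big)\sum_{k=1}^N a_k$, together with the triangle inequality $\|A_1+\cdots+A_N\|\le\sum_k a_k$. Factoring out $\big(h\sum_k a_k\big)^{p+1}$ then yields \eqref{eq_a1}.
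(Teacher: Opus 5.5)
Your proposal is correct and follows the paper's own argument. The paper likewise writes $\Psi(h)=\e^{hC_r}\cdots\e^{hC_1}$ with $r=2Ns$, computes $\sum_{j}\|C_j\|=\big(\sum_{i=1}^{2s}|\alpha_i|\big)\sum_{k}a_k$, and invokes Corollary~\ref{c:expB_le} with $q=p$, just as in the proof of Theorem~\ref{th_gen.split}. Your explicit justification that $C_1+\cdots+C_r=A_1+\cdots+A_N$ and that the coefficients for $n\le p$ vanish fills in details the paper leaves implicit.
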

Taking into account that the composition \eqref{eq:comp_Strang} of the Strang splitting constitutes a particular example of \eqref{eq:Psi_alpha}, we get an analogous
error estimate in this case.
\begin{corollary} \label{th_compoS}
Suppose that the $s$-stage composition 
\[
  \Psi(h) = S_2(\gamma_s h) \,  \cdots \, S_2(\gamma_2 h) \, S_2(\gamma_1 h)
\]
with $(\gamma_1, \ldots, \gamma_s) \in \mathbb{R}^s$, and $S_2(h) = \chi(h/2) \chi^*(h/2)$ with
\[
  \chi(h) = \e^{h A_1} \, \e^{h A_2} \cdots \e^{h A_N}, \qquad \chi^*(h) = \e^{h A_N} \cdots \e^{h A_2} \, \e^{h A_1}, 
\]
 provides an approximation of order $p$ to $\e^{h(A_1 + \cdots + A_N)}$,
and $A_j$ skew-adjoint operators verifying that $\|A_j\| = a_j < \infty$ for $j=1,\ldots, N$. Then, the following estimate holds:
\begin{equation}  \label{eq_a2}
  \left\|\Psi(h)-\e^{h \sum_{j=1}^N A_j} \right\| \le \frac{1}{(p+1)!} \left[ 1 +  \left( \sum_{i=1}^s |\gamma_i| \right)^{p+1} \right] \, \left(h \sum_{j=1}^N a_j \right)^{p+1}.
\end{equation}
\end{corollary}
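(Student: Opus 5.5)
The plan is to obtain Corollary~\ref{th_compoS} directly from Theorem~\ref{th_compoLT}, by viewing the Strang composition as a special case of the Lie--Trotter composition \eqref{eq:Psi_alpha}.

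\textbf{Rewriting the scheme.} Since $S_2(\gamma h) = \chi(\gamma h/2)\,\chi^*(\gamma h/2)$, the product can be written as
\[
\Psi(h) = \chi\!\left(\tfrac{\gamma_s}{2}h\right)\chi^*\!\left(\tfrac{\gamma_s}{2}h\right)\cdots \chi\!\left(\tfrac{\gamma_1}{2}h\right)\chi^*\!\left(\tfrac{\gamma_1}{2}h\right).
\]
This is exactly \eqref{eq:Psi_alpha} with $2s$ coefficients $\alpha_{2j-1}=\alpha_{2j}=\gamma_j/2$ for $j=1,\ldots,s$, as already noted after \eqref{eq:comp_Strang}.

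\textbf{Checking the hypotheses.} The hypotheses of Theorem~\ref{th_compoLT} are met: the operators $A_j$ are the same skew-adjoint bounded operators, and order $p$ is a property of the operator $\Psi(h)$ itself, not of how it is parametrized. Hence \eqref{eq_a1} applies verbatim to this $\Psi(h)$.

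\textbf{Computing the coefficient sum.} It remains to evaluate the sum appearing in \eqref{eq_a1}:
\[
\sum_{i=1}^{2s}|\alpha_i| = \sum_{j=1}^s \left(\tfrac{|\gamma_j|}{2}+\tfrac{|\gamma_j|}{2}\right) = \sum_{j=1}^s |\gamma_j|.
\]
Substituting this into \eqref{eq_a1} gives \eqref{eq_a2} immediately.

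There is no real obstacle in this argument. The only point that needs care is confirming that the index bookkeeping of $\chi$ versus $\chi^*$ in \eqref{eq:Psi_alpha} matches the factorization $S_2=\chi(\cdot/2)\chi^*(\cdot/2)$, with the rightmost factor being a $\chi^*$. It does, so the identification of the coefficients $\alpha_i$ above is correct.
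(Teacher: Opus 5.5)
Your proposal is correct and follows essentially the same route as the paper. The paper obtains Corollary~\ref{th_compoS} in the same way: it notes that \eqref{eq:comp_Strang} is the special case of \eqref{eq:Psi_alpha} with $\alpha_{2j-1}=\alpha_{2j}=\gamma_j/2$, applies Theorem~\ref{th_compoLT}, and uses $\sum_{i=1}^{2s}|\alpha_i|=\sum_{j=1}^{s}|\gamma_j|$.
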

It is worth remarking that bounds \eqref{eq_a1} and \eqref{eq_a2} depend solely on the 1-norm of the coefficient vectors $(\alpha_1, \ldots, \alpha_{2s})$ and
$(\gamma_1, \ldots, \gamma_s)$, respectively. This observation provides some theoretical ground for the common practice  when designing of numerical schemes, namely minimizing this 1-norm for getting more efficient integrators \cite{blanes24smf}.

\subsection{Estimates of the local error for symmetric schemes}
\label{subsec_2.3}

Composition schemes \eqref{eq:Psi_alpha} and \eqref{eq:comp_Strang} with a palindromic sequence of coefficients, that is,
\begin{equation} \label{palindromic_alpha}
  \Psi(h) = \chi(\alpha_{2s} h) \, \chi^*(\alpha_{2s-1} h) \cdots \chi(\alpha_2 h) \, \chi^*(\alpha_1 h), 
     \quad \mbox{ with } \quad \alpha_{2s+1-j} = \alpha_j, \quad 1 \leq j \leq 2s
\end{equation}
and
\begin{equation}
\label{eq:palindromic_gamma}
\Psi(h) = S_2(\gamma_s h) \,  \cdots \, S_2(\gamma_2 h) \, S_2(\gamma_1 h) \qquad \mbox{ with } \qquad \gamma_{s+1-i} = \gamma_i, \quad 1 \leq i \leq s,
\end{equation}
respectively, are a common choice for high-order methods, as they preserve time symmetry and typically offer a favorable trade-off between the number of maps in the composition and the resulting accuracy.
It turns out that the previous error estimates can also be improved for these cases, as we next show.

The mapping \eqref{palindromic_alpha} admits a symmetric factorization,
\begin{equation} \label{eq:sym_factorization}
\Psi(h) = \Phi^*(h) \, \Phi(h),
\end{equation}
where
$\Phi^*(h) = \Phi(-h)^{-1}$, and
\begin{equation}
\label{eq:Phi_s_odd}
\Phi(h) = \chi^*(\alpha_{s} h) \, \chi(\alpha_{s-1} h) \, \cdots \, \chi(\alpha_2 h) \, \chi^*(\alpha_1 h) \qquad \mbox{ for $s$ odd}, 
\end{equation}
and
\begin{equation}
\label{eq:Phi_s_even}
\Phi(h) = \chi(\alpha_{s} h) \, \chi^*(\alpha_{s-1} h)  \, \cdots \, \chi(\alpha_2 h) \, \chi^*(\alpha_1 h) \qquad \mbox{ for $s$ even}. 
\end{equation}
We will make use of the following auxiliary result for arbitrary approximations $\Psi(h)$ of $\e^{h (A_1+\cdots+A_n)}$ that admit the symmetric factorization \eqref{eq:sym_factorization}:

\begin{lemma} \label{l:Theta}
Let  $\Phi(h)$ be a one-parameter family of unitary operators, and consider the symmetric factorization $\Psi(h) = \Phi^*(h) \, \Phi(h)$.
Then
\begin{equation*}
\|\Psi(h) - \e^{h\,  (A_1+\cdots+A_N)}\| \leq \| \Theta(h) - \Theta(-h)\|,
\end{equation*}
where
\begin{equation} \label{eq:Theta}
\Theta(h) = \Phi(h) \, \e^{-\frac{h}{2}  (A_1+\cdots+A_N)}.
\end{equation}
\end{lemma}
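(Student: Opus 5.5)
Write $A = A_1+\cdots+A_N$ and $E(h)=\e^{hA}$. The plan is to rewrite the local error as a product that isolates $\Theta(h)$ and $\Theta(-h)$, and then use unitarity to discard the outer factors.

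\emph{Step 1: express $\Psi(h)$ through $\Theta$.} Since $\Phi$ is unitary, $\Phi^*(h)=\Phi(-h)^{-1}=\Phi(-h)^{\dagger}$. From \eqref{eq:Theta} we have $\Phi(h)=\Theta(h)\,\e^{\frac h2 A}$ and $\Phi(-h)=\Theta(-h)\,\e^{-\frac h2 A}$. Therefore
\[
\Phi^*(h)=\Phi(-h)^{-1}=\e^{\frac h2 A}\,\Theta(-h)^{-1},
\]
and so
\[
\Psi(h)=\e^{\frac h2 A}\,\Theta(-h)^{-1}\,\Theta(h)\,\e^{\frac h2 A}.
\]

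\emph{Step 2: factor the error.} The exact flow splits as $\e^{hA}=\e^{\frac h2 A}\,I\,\e^{\frac h2 A}$. Subtracting this from Step 1 gives
\[
\Psi(h)-\e^{hA}=\e^{\frac h2 A}\,\Theta(-h)^{-1}\bigl(\Theta(h)-\Theta(-h)\bigr)\,\e^{\frac h2 A}.
\]

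\emph{Step 3: take norms.} The factors $\e^{\pm\frac h2 A}$ are unitary because $A$ is skew-adjoint. The factor $\Theta(-h)^{-1}$ is unitary because it is a product of the unitary operators $\Phi(-h)$ and $\e^{\frac h2 A}$. Since unitary factors preserve the operator norm, it follows that
\[
\|\Psi(h)-\e^{hA}\|=\|\Theta(h)-\Theta(-h)\|.
\]
This is in fact an equality, which in particular gives the claimed inequality.

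The only point needing care is the identification $\Phi^*(h)=\Phi(-h)^{-1}$ and writing it in terms of $\Theta(-h)$. This requires $\Phi(-h)$ to be unitary, which holds because the statement assumes $\Phi$ is unitary for all values of its parameter, including $-h$. Everything else is routine bookkeeping.
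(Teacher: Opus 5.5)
Your proof is correct and is essentially the paper's argument: since $\e^{\frac h2 A}\,\Theta(-h)^{-1}=\Phi(-h)^{-1}=\Phi^*(h)$, your factorization is exactly the paper's identity $\Psi(h)-\e^{hA}=\Phi^*(h)\bigl(\Theta(h)-\Theta(-h)\bigr)\e^{\frac h2 A}$, and the paper likewise discards the unitary outer factors. Your remark that this gives equality of norms is also correct; the paper simply states the weaker inequality.
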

\begin{proof}
The result follows from the identity
\begin{equation*}
\Psi(h) - \e^{h\,  (A_1+\cdots+A_N)} = \Phi^*(h) \, \big(\Theta(h)-\Theta(-h) \big) \, \e^{\frac{h}{2} (A_1+\cdots+A_N) },
\end{equation*}
and the unitarity of both $\e^{\frac{h}{2} (A_1+\cdots+A_N) }$ and $\Phi(h)$.
\end{proof}
Clearly, the scheme is of order of accuracy $p$ if and only if 
\begin{equation} \label{order_p}
\mathcal{E}(h) := \Theta(h)-\Theta(-h) =  \mathcal{O}(h^{p+1}) \quad \mbox{as} \quad h \to 0. 
\end{equation}
Observe that, $\mathcal{E}(-h)=-\mathcal{E}(h)$, which shows  that symmetric schemes necessarily attain even orders of accuracy.

\begin{theorem} \label{th_compo}
Suppose that the time-symmetric $2s$-stage composition \eqref{palindromic_alpha} 
\[
   \Psi(h) = \chi(\alpha_{1} h) \, \chi^*(\alpha_{2} h) \cdots \chi(\alpha_2 h) \, \chi^*(\alpha_1 h), 
\]
with $(\alpha_1, \alpha_2, \ldots, \alpha_s) \in \mathbb{R}^s$ and
\[
  \chi(h) = \e^{h A_1} \, \e^{h A_2} \cdots \e^{h A_N}, \qquad \chi^*(h) = \e^{h A_N} \cdots \e^{h A_2} \, \e^{h A_1}, 
\]
 provides an approximation of even order $p$ to $\e^{h(A_1 + \cdots + A_N)}$,
with $A_j$ skew-adjoint operators verifying that $\|A_j\| = a_j < \infty$ for $j=1,\ldots, N$. Then, the following estimate holds:
\begin{equation*} 
  \left\|\Psi(h)-\e^{h \sum_{j=1}^N A_j} \right\| \le \frac{2^{-p}}{(p+1)!} \left( 1+ 2 \sum_{i=1}^s |\alpha_i| \right)^{p+1}  \, \left(h \sum_{j=1}^N a_j \right)^{p+1}.
\end{equation*}
\end{theorem}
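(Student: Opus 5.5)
We use Lemma~\ref{l:Theta} with the symmetric factorization \eqref{eq:sym_factorization}. Then it suffices to bound $\|\Theta(h)-\Theta(-h)\|$, where $\Theta(h)=\Phi(h)\,\e^{-\frac h2 (A_1+\cdots+A_N)}$ and $\Phi(h)$ is the product of $s$ Lie--Trotter type factors $\chi$ or $\chi^*$ with coefficients $\alpha_1,\ldots,\alpha_s$. Here we take the indices as in the displayed statement, so that $\alpha_1,\ldots,\alpha_s$ are the free coefficients of the palindrome.

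The key observation is that $\Theta(h)$ is itself a product of exponentials of bounded skew-adjoint operators:
\[
\Theta(h)=\e^{h C_r}\cdots \e^{h C_1}, \qquad r=sN+N.
\]
The first $N$ factors (in order of application) are the pieces $\e^{-\frac h2 A_k}$; splitting $\e^{-\frac h2 (A_1+\cdots+A_N)}$ into them is not an identity, so instead we use the whole exponential as a single factor with operator $C_1=-\frac12(A_1+\cdots+A_N)$. Its norm is at most $\frac12\sum_j a_j$. The remaining factors $\alpha_i A_k$ come from $\Phi(h)$. Therefore
\[
\sum_j \|C_j\| \le \Big(\tfrac12+\sum_{i=1}^s|\alpha_i|\Big)\sum_{j=1}^N a_j = \tfrac12\Big(1+2\sum_{i=1}^s|\alpha_i|\Big)\sum_{j=1}^N a_j .
\]

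Next we apply Proposition~\ref{p:boundsB} with $q=p$ to $\Theta(h)$ and to $\Theta(-h)$, the latter being the same product with $h$ replaced by $-h$. This gives
\[
\Theta(\pm h)=P(\pm h)+(\pm h)^{p+1}\mathcal R_{p+1}(\pm h),
\]
where $P$ is the same polynomial of degree $p$ in both cases. The order condition \eqref{order_p} says that $\Theta(h)-\Theta(-h)=\mathcal O(h^{p+1})$. Since this difference is an odd function of $h$ and is analytic in $h$ for bounded operators, the odd-degree coefficients of $P$ up to degree $p$ must vanish. It follows that $P(h)-P(-h)=0$ exactly, and hence
\[
\Theta(h)-\Theta(-h)=h^{p+1}\big(\mathcal R_{p+1}(h)-\mathcal R_{p+1}(-h)\big),
\]
using that $p+1$ is odd. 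Each remainder is bounded by $\big(\sum_j\|C_j\|\big)^{p+1}/(p+1)!$, which gives
\[
\|\Theta(h)-\Theta(-h)\|\le \frac{2}{(p+1)!}\,2^{-(p+1)}\Big(1+2\sum_{i=1}^s|\alpha_i|\Big)^{p+1}\Big(h\sum_{j=1}^N a_j\Big)^{p+1}.
\]
This is exactly the claimed constant $2^{-p}/(p+1)!$.

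The main obstacle is the careful justification that the order-$p$ assumption on $\Psi$ forces the Taylor polynomial of $\Theta(h)-\Theta(-h)$ up to degree $p$ to vanish identically. Equivalently, one must confirm that the equivalence stated after Lemma~\ref{l:Theta} holds, including the direction from the order of $\Psi$ to $\mathcal E(h)=\mathcal O(h^{p+1})$. This follows from the identity in the proof of Lemma~\ref{l:Theta}, because $\Phi^*(h)$ and $\e^{\frac h2 A}$ are invertible with invertible leading terms. A secondary point is checking that the remainder representation of Proposition~\ref{p:boundsB} holds for negative $h$. This is fine, since the proof uses only the unitarity of $Y$, which holds for real $h$ of either sign.
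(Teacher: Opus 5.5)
Your proof is correct and follows the paper's own argument. You write $\Theta(\pm h)$ as products of exponentials with $C_1=-\frac12(A_1+\cdots+A_N)$ and $\sum_j\|C_j\|\le(\frac12+\sum_{i=1}^s|\alpha_i|)\sum_j a_j$, apply Proposition~\ref{p:boundsB} with $q=p$ to both $\Theta(h)$ and $\Theta(-h)$, and let the order condition cancel the polynomial parts, which yields the factor $2\cdot 2^{-(p+1)}=2^{-p}$. The only slips are cosmetic and do not affect the bound: the number of factors is $r=sN+1$ rather than $sN+N$, and the remainder combination is $h^{p+1}(\mathcal R_{p+1}(h)+\mathcal R_{p+1}(-h))$ rather than a difference.
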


\begin{proof}
It is straightforward to check that 
\[
\Theta(h)=\e^{h C_r} \cdots \e^{h C_1}, \qquad 
\Theta(-h) = \e^{-h C_r} \cdots \e^{-h C_1}
\]
 for certain bounded skew-adjoint operators $C_j$ such that 
\begin{equation*}
\|C_1\|+\cdots+\|C_r\|  = \left( \frac{1}{2} + |\alpha_1|+\cdots+|\alpha_s| \right) \, \sum_{j=1}^N a_j.
\end{equation*}
Since $\mathcal{E}(h) = \mathcal{O}(h^{p+1})$, Lemma~\ref{l:Theta} and Proposition~\ref{p:boundsB} lead to
\[
  \|\mathcal{E}(h)\| \le  \frac{2 h^{p+1}}{(p+1)!} \left( \|C_1\|+\cdots+\|C_r\| \right)^{p+1}
 \]
 whence the conclusion follows. 
\end{proof}
As before, this result extends naturally to compositions \eqref{eq:palindromic_gamma}:

\begin{corollary} \label{th_compo_sym}
Suppose that the sequence $(\gamma_1, \ldots, \gamma_s) \in \mathbb{R}^s$ is palindromic, i.e., it is such that \eqref{eq:palindromic_gamma} holds, and that the symmetric $s$-stage composition 
\[
  \Psi(h) = S_2(\gamma_s h) \,  \cdots \, S_2(\gamma_2 h) \, S_2(\gamma_1 h)
\]
where $S_2(h) = \chi(h/2) \chi^*(h/2)$ with
\[
  \chi(h) = \e^{h A_1} \, \e^{h A_2} \cdots \e^{h A_N}, \qquad \chi^*(h) = \e^{h A_N} \cdots \e^{h A_2} \, \e^{h A_1}, 
\]
 provides an approximation of even order $p$ to $\e^{h(A_1 + \cdots + A_N)}$,
with $A_j$ skew-adjoint operators verifying that $\|A_j\| = a_j < \infty$ for $j=1,\ldots, N$. Then, the following estimate holds:
\begin{equation*} 
  \left\|\Psi(h)-\e^{h \sum_{j=1}^N A_j} \right\| \le \frac{2^{-p}}{(p+1)!} \left( 1+ \sum_{i=1}^s |\gamma_i| \right)^{p+1}  \, \left(h \sum_{j=1}^N a_j \right)^{p+1}.
\end{equation*}
\end{corollary}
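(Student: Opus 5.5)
We reduce Corollary~\ref{th_compo_sym} to Theorem~\ref{th_compo}, in the same way Corollary~\ref{th_compoS} follows from Theorem~\ref{th_compoLT}. Writing $S_2(\gamma_i h) = \chi(\frac{\gamma_i}{2}h)\chi^*(\frac{\gamma_i}{2}h)$, the composition $\Psi(h)$ becomes a $2s$-stage composition of type \eqref{eq:Psi_alpha} with coefficients
\[
(\alpha_1,\alpha_2,\ldots,\alpha_{2s-1},\alpha_{2s}) = \left(\tfrac{\gamma_1}{2},\tfrac{\gamma_1}{2},\ldots,\tfrac{\gamma_s}{2},\tfrac{\gamma_s}{2}\right).
\]
The first step is to check that this sequence is palindromic whenever $(\gamma_1,\ldots,\gamma_s)$ is. For $j=2i-1$ we have $2s+1-j = 2(s+1-i)$, so $\alpha_{2s+1-j} = \gamma_{s+1-i}/2 = \gamma_i/2 = \alpha_j$. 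The case $j=2i$ is symmetric. Hence $\Psi(h)$ has exactly the palindromic form \eqref{palindromic_alpha}.

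Next we apply Theorem~\ref{th_compo}. There the bound involves $\sum_{i=1}^{s}|\alpha_i|$, which is half of the full $1$-norm $\sum_{j=1}^{2s}|\alpha_j|$ of the palindromic $2s$-sequence. This is because the factor $\Phi(h)$ in \eqref{eq:sym_factorization} uses only the first half of the coefficients. In our case
\[
\sum_{j=1}^{2s}|\alpha_j| = \sum_{i=1}^s|\gamma_i|, \qquad\text{so}\qquad 2\sum_{i=1}^{s}|\alpha_i| = \sum_{i=1}^s|\gamma_i|.
\]
Substituting into the estimate of Theorem~\ref{th_compo} gives the factor $\bigl(1+\sum_{i=1}^s|\gamma_i|\bigr)^{p+1}$, as claimed.

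The only delicate point is the indexing mismatch. Theorem~\ref{th_compo} writes the half-sequence with $s$ entries, whereas here the half-sequence has $s$ entries only after relabeling the $2s$-stage scheme. If $s$ is odd, the middle Strang factor is split between $\Phi$ and $\Phi^*$: its two halves $\chi(\frac{\gamma_m}{2}h)$ and $\chi^*(\frac{\gamma_m}{2}h)$, with $m=(s+1)/2$, go one to each side. To guard against this, I would directly recount $\|C_1\|+\cdots+\|C_r\|$ for $\Theta(h)=\Phi(h)\e^{-\frac h2\sum_j A_j}$, as in the proof of Theorem~\ref{th_compo}. $\Phi(h)$ contains the first half of the factors $\e^{h\alpha A_k}$, whose total norm is $\frac12\sum_i|\gamma_i|\sum_k a_k$. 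The exponential $\e^{-\frac h2\sum_j A_j}$ contributes $\frac12\sum_k a_k$. The sum is therefore $\frac12\bigl(1+\sum_i|\gamma_i|\bigr)\sum_k a_k$.

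Finally, Lemma~\ref{l:Theta} together with Proposition~\ref{p:boundsB}, applied to $\Theta(\pm h)$ and using that $\mathcal{E}(h)=\mathcal{O}(h^{p+1})$, gives
\[
\|\mathcal{E}(h)\| \le \frac{2h^{p+1}}{(p+1)!}\,2^{-(p+1)}\Bigl(1+\sum_i|\gamma_i|\Bigr)^{p+1}\Bigl(\sum_j a_j\Bigr)^{p+1}.
\]
This is exactly the stated bound.
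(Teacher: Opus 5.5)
Your proposal is correct and follows the paper's route: the corollary is Theorem~\ref{th_compo} specialized to the palindromic $2s$-sequence $(\tfrac{\gamma_1}{2},\tfrac{\gamma_1}{2},\ldots,\tfrac{\gamma_s}{2},\tfrac{\gamma_s}{2})$, for which $2\sum_{i=1}^{s}|\alpha_i|=\sum_{i=1}^s|\gamma_i|$, and your direct recount of $\|C_1\|+\cdots+\|C_r\|$ (with the middle Strang factor split between $\Phi$ and $\Phi^*$ when $s$ is odd) simply redoes the proof of Theorem~\ref{th_compo} in this setting. One cosmetic point: $\e^{-\frac h2\sum_j A_j}$ contributes at most, not exactly, $\frac12\sum_k a_k$, since $\|A_1+\cdots+A_N\|\le\sum_k a_k$; this does not affect the bound.
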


By applying the same techniques, a similar result can be obtained for splitting schemes \eqref{gen.split} that are symmetric:
\begin{theorem} 
\label{th_gen.split_sym}
Suppose that the product formula \eqref{gen.split} admits a factorization of the form \eqref{eq:sym_factorization}
and provides an approximation of even order $p$ to $\e^{h(A_1 + \cdots + A_N)}$,
with $A_j$ skew-adjoint operators verifying that $\|A_j\| =  a_j < \infty$ for $j=1,\ldots, N$. Then, the following estimate holds:
\begin{equation*}
  \left\|\Psi(h)-\e^{h \sum_{j=1}^N A_j} \right\| \leq \frac{h^{p+1}}{2^p (p+1)!} \left[ \sum_{i=1}^N a_i \, \left(1+|\alpha_i^{(1)}|+\cdots+|\alpha_i^{(s)}|\right)\right]^{p+1}.
  \end{equation*}
\end{theorem}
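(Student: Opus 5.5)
We follow the same route as in the proof of Theorem~\ref{th_compo}, replacing the Lie--Trotter structure by the general product \eqref{gen.split}. By hypothesis, $\Psi(h)=\Phi^*(h)\Phi(h)$ with $\Phi(h)$ a product of exponentials of the form $\e^{h\beta A_i}$. We take $\Phi(h)$ to consist of the right ``half'' of the palindromic string of exponentials in \eqref{gen.split}. If the middle factor $\e^{h\alpha A_i}$ sits at the centre of the palindrome, we split it as $\e^{h\alpha A_i/2}\e^{h\alpha A_i/2}$ and assign one half to each side.

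Lemma~\ref{l:Theta} then gives
\[
\|\Psi(h)-\e^{hA}\|\le\|\Theta(h)-\Theta(-h)\|,\qquad \Theta(h)=\Phi(h)\,\e^{-\frac h2 (A_1+\cdots+A_N)} .
\]
We write $\e^{-\frac h2 (A_1+\cdots+A_N)}$ as a single exponential factor. Then $\Theta(h)=\e^{hC_r}\cdots\e^{hC_1}$, where $C_1=-\frac12(A_1+\cdots+A_N)$ and the remaining $C_j$ are the scaled operators $\beta A_i$ coming from $\Phi$. All $C_j$ are bounded and skew-adjoint, and $\Theta(-h)=\e^{-hC_r}\cdots\e^{-hC_1}$.

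\textbf{Norm count.} Since $\Psi=\Phi^*\Phi$ and the factors of $\Phi^*$ mirror those of $\Phi$, the sum over $\Phi$ of the coefficient moduli attached to $A_i$ equals exactly half of $|\alpha_i^{(1)}|+\cdots+|\alpha_i^{(s)}|$. The halving of a central factor preserves this, because $|\alpha/2|+|\alpha/2|=|\alpha|$. Bounding $\|C_1\|\le\frac12\sum_i a_i$ then gives
\[
\sum_j\|C_j\|\le \frac12\sum_{i=1}^N a_i\bigl(1+|\alpha_i^{(1)}|+\cdots+|\alpha_i^{(s)}|\bigr)=: \frac12 M .
\]

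\textbf{Error bound.} Apply Proposition~\ref{p:boundsB} with $q=p$ to both $\Theta(h)$ and $\Theta(-h)$. Their Taylor polynomials in $h$ up to degree $p$ are built from the same $C_j$, with $h$ replaced by $-h$. Because the scheme has order $p$, \eqref{order_p} says $\mathcal E(h)=\Theta(h)-\Theta(-h)=\mathcal O(h^{p+1})$, so the polynomial parts must cancel identically. What remains is $h^{p+1}\bigl(\mathcal R_{p+1}(h)-(-1)^{p+1}\mathcal R_{p+1}(-h)\bigr)$. Proposition~\ref{p:boundsB} applies to negative $h$ as well, since unitarity is all that is used. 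Hence
\[
\|\mathcal E(h)\|\le \frac{2h^{p+1}}{(p+1)!}\Bigl(\frac M2\Bigr)^{p+1}=\frac{h^{p+1}}{2^p(p+1)!}M^{p+1},
\]
which is the claimed estimate.

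\textbf{Main obstacle.} The only delicate step is the bookkeeping that the coefficients in $\Phi$ carry exactly half of each total $\sum_j|\alpha_i^{(j)}|$. This requires the factorization \eqref{eq:sym_factorization} to be realized by splitting the product string symmetrically, possibly after splitting a central exponential. If the assumed factorization were only abstract, we would first argue that any admissible $\Phi$ can be chosen of this form, so that the norm sum over $\Phi$ is at most half of the total.
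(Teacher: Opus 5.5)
Your proposal is correct and follows essentially the paper's route. The paper gives no separate proof of this theorem; it simply invokes ``the same techniques'' as in Theorem~\ref{th_compo}. Those techniques are: write $\Theta(h)=\Phi(h)\e^{-\frac h2 (A_1+\cdots+A_N)}$ as $\e^{hC_r}\cdots\e^{hC_1}$ with $C_1=-\frac12(A_1+\cdots+A_N)$, bound $\sum_j\|C_j\|$ by half the bracketed quantity, and combine Lemma~\ref{l:Theta} with Proposition~\ref{p:boundsB} to obtain the factor $2h^{p+1}/(p+1)!$. You do exactly this, and you additionally make explicit two points the paper leaves implicit: the cancellation of the degree-$\le p$ Taylor parts of $\Theta(h)$ and $\Theta(-h)$, and the reading of the factorization hypothesis as a symmetric splitting of the exponential string.
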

As is well known, in the case of just two operators, $A = A_1 + A_2$, there exists a  close connection between composition \eqref{eq:Psi_alpha}  
and the splitting method
\begin{equation} \label{splitting_2}
  \Psi(h) = \e^{c_{s+1} t A_1}  \, \e^{d_s t A_2} \, \e^{c_s t A_1} \, \ldots \, \e^{d_1 t A_2} \, \e^{c_1 t A_1}, 
\end{equation}
with $c_j, d_j \in \mathbb{R}$. Specifically, \eqref{eq:Psi_alpha} can be rewritten as \eqref{splitting_2} if 
\begin{equation} \label{comp_split_con}
c_1 = \alpha_1, \quad \mbox{ and } \qquad c_{j+1} = \alpha_{2j} + \alpha_{2j+1}, \qquad d_{j} = \alpha_{2j-1} + \alpha_{2j}, \qquad j=1,\ldots,s
\end{equation}
(with $\alpha_{2s+1} = 0$). Conversely, any integrator \eqref{splitting_2} satisfying the condition $\sum_{j=1}^{s+1} c_j = \sum_{j=1}^s d_j$ can be expressed 
in the form \eqref{eq:Psi_alpha} \cite{mclachlan95otn}. In consequence, 
\begin{theorem} \label{th_split_2}
Given the splitting scheme
\[
  \Psi(h) = \e^{c_{s+1} t A_1}  \, \e^{d_s t A_2} \, \e^{c_s t A_1} \, \ldots \, \e^{d_1 t A_2} \, \e^{c_1 t A_1}, 
\]  
with $c_j, d_j \in \mathbb{R}$, providing an approximation of order $p$ to $\e^{h(A_1+A_2)}$, with $A_j$ skew-adjoint operators such that 
$\|A_j\| = a_j < \infty$, $j=1,2$, then
\[
  \| \Psi(h)  - \e^{h(A_1+A_2)} \| \le \frac{h^{p+1}}{(p+1)!} \left[ 1 +  \left( \sum_{i=1}^{2s} |\alpha_i| \right)^{p+1} \right] \, (a_1 + a_2)^{p+1},
\] 
where the coefficients $\alpha_i$ are related with $c_j, d_j$ through \eqref{comp_split_con}. If in addition, $\Psi(h)$ is time-symmetric, i.e., 
\[
  c_{s-j+2} = c_j, \qquad d_{s-j+1} = d_j, \qquad \mbox{ for all } j,
\]
then $\alpha_{2s-j+1} = \alpha_j$, the order $p$ is even and   
\[
  \| \Psi(h)  - \e^{h(A_1+A_2)} \| \le \frac{h^{p+1}}{2^p (p+1)!} \left[ 1 +  2 \left( \sum_{i=1}^{s} |\alpha_i| \right)^{p+1} \right] \, (a_1 + a_2)^{p+1},
\] 
\end{theorem}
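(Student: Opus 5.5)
The first (non-symmetric) estimate is immediate. Under \eqref{comp_split_con} the scheme $\Psi(h)$ coincides with the composition \eqref{eq:Psi_alpha} for $N=2$, so Theorem~\ref{th_compoLT} applies verbatim with $\sum_j a_j = a_1+a_2$. For the symmetric statement I would not rerun the proof of Theorem~\ref{th_compo}. That proof lumps all exponentials of $\Theta(h)=\Phi(h)\e^{-\frac h2 A}$ into a single product and yields $\bigl(1+2\sum_{i=1}^s|\alpha_i|\bigr)^{p+1}$. Instead, I would keep the contributions of $\Phi$ and of the exponential $\e^{\mp\frac h2A}$ separate, so that the two pieces appear additively, as in Corollary~\ref{c:expB_le}.

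\textbf{Step 1: reduce to $\mathcal{E}(h)$.} When $c_{s-j+2}=c_j$ and $d_{s-j+1}=d_j$, relations \eqref{comp_split_con} give $\alpha_{2s+1-j}=\alpha_j$. Hence $\Psi(h)=\Phi^*(h)\Phi(h)$, with $\Phi$ given by \eqref{eq:Phi_s_odd}--\eqref{eq:Phi_s_even}. By Lemma~\ref{l:Theta} it suffices to bound $\mathcal{E}(h)=\Theta(h)-\Theta(-h)$, and \eqref{order_p} together with $\mathcal{E}(-h)=-\mathcal{E}(h)$ shows that $p$ is even.

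\textbf{Step 2: split $\mathcal{E}(h)$.} Write $F(h)=\Phi(h)-\e^{\frac h2A}$, so that
\[
\mathcal{E}(h)=F(h)\,\e^{-\frac h2A}-F(-h)\,\e^{\frac h2A}.
\]
Apply Proposition~\ref{p:boundsB} separately to $\Phi(\pm h)$ and to $\e^{\pm\frac h2 A}$. Here $\Phi(\pm h)$ is a product of exponentials of skew-adjoint operators whose norms add up to $\bigl(\sum_{i=1}^s|\alpha_i|\bigr)(a_1+a_2)$, and $\e^{\pm\frac h2A}$ has norm-sum $\frac12(a_1+a_2)$. Each operator then equals its degree-$p$ Taylor polynomial plus a remainder of size at most $(\text{norm-sum})^{p+1}h^{p+1}/(p+1)!$. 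The part of $\mathcal{E}$ of degree $\le p$ in $h$ vanishes by \eqref{order_p}.

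\textbf{Step 3: control the remainder.} I would arrange the telescoping so that every leftover product contains exactly one remainder factor. The remaining factors should be either unitary (the full $\Phi(\pm h)$ or $\e^{\pm\frac h2A}$) or combine into polynomial pieces that cancel between $h$ and $-h$. The natural choice is
\[
\mathcal{E}(h)=\bigl(\Phi(h)-\Phi(-h)\bigr)\e^{-\frac h2A}+\Phi(-h)\bigl(\e^{-\frac h2A}-\e^{\frac h2A}\bigr),
\]
expanding each bracket only to order $p$. The exponential remainders should then contribute $2\,(\tfrac{a_1+a_2}{2})^{p+1}/(p+1)!$, which is exactly the ``$1$'' term, since $2\cdot 2^{-(p+1)}=2^{-p}$. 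The $\Phi$ remainders should contribute the term with $2\bigl(\sum_{i=1}^s|\alpha_i|\bigr)^{p+1}$.

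\textbf{Main obstacle.} The hard step is showing that the cross terms between the Taylor polynomials of $\Phi$ and of the exponential do not add mixed powers $\sigma^k(1/2)^{p+1-k}$, where $\sigma=\sum_{i=1}^s|\alpha_i|$. Such mixed powers are precisely what produced $(1+2\sigma)^{p+1}$ in Theorem~\ref{th_compo}. My first attempt would be the majorant comparison used in Proposition~\ref{p:boundsB}, where the operators are replaced by their norms and the products by scalar exponentials, applied separately to the two brackets above. The key point is that each bracket is odd in $h$, so only its own degree-$(p+1)$ tail survives and no mixed products arise. If the constant still comes out as $2\sigma^{p+1}$ times $2^{-p}$ only up to a factor, I would rescale $h\mapsto h/2$ inside $\Phi$. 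This is done by viewing $\Phi(h)$ as half of the symmetric step, so that the $2^{-p}$ prefactor multiplies both pieces uniformly.
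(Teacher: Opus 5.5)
Your treatment of the first estimate is the paper's. Via \eqref{comp_split_con} the scheme is the composition \eqref{eq:Psi_alpha} with $N=2$, and Theorem~\ref{th_compoLT} applies. The symmetric estimate is, in the paper, an equally immediate consequence: the $\alpha_i$ are palindromic, so Theorem~\ref{th_compo} applies directly and gives
\[
\bigl\|\Psi(h)-\e^{h(A_1+A_2)}\bigr\|\le \frac{2^{-p}}{(p+1)!}\Bigl(1+2\sum_{i=1}^s|\alpha_i|\Bigr)^{p+1}\bigl(h(a_1+a_2)\bigr)^{p+1}.
\]
The printed bracket $\bigl[1+2(\sum_{i=1}^s|\alpha_i|)^{p+1}\bigr]$ is a misplaced exponent. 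The route you deliberately set aside is the intended one, and the additive bound you aim for cannot be proved because it is false.

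Consider Strang, $\e^{\frac h2A_1}\e^{hA_2}\e^{\frac h2A_1}$. Here $s=1$, $\alpha_1=\alpha_2=\frac12$, $p=2$, and the printed bound reads $\frac{5}{96}h^3(a_1+a_2)^3$. On the other hand, $\Psi(h)-\e^{h(A_1+A_2)}=h^3L+\mathcal{O}(h^4)$ with $L=\frac1{12}[A_2,[A_2,A_1]]-\frac1{24}[A_1,[A_1,A_2]]$. Take
\[
A_1=a_1\, i\begin{pmatrix}1&0\\0&-1\end{pmatrix},\qquad A_2=a_2\, i\begin{pmatrix}c&\sqrt{1-c^2}\\ \sqrt{1-c^2}&-c\end{pmatrix}.
\]
A direct computation gives $\|L\|=\sqrt{1-c^2}\,\sqrt{u^2+v^2+2uvc}$, with $u=a_1^2a_2/6$ and $v=a_1a_2^2/3$. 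For $a_1=0.37$, $a_2=0.63$, $c=0.24$ this is $\approx 0.05264>5/96\approx 0.05208$, so the printed inequality fails for small $h$.

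The step that breaks is Step~3, specifically the claim that because each bracket is odd in $h$, ``no mixed products arise''. With $A=A_1+A_2$, you have $\Phi(h)-\Phi(-h)=hA+\mathcal{O}(h^3)$ and $\e^{-\frac h2A}-\e^{\frac h2A}=-hA+\mathcal{O}(h^3)$. Oddness removes only even powers, and neither bracket is $\mathcal{O}(h^{p+1})$. The terms of orders $h,h^3,\dots,h^{p-1}$ cancel only after multiplication by $\e^{-\frac h2A}$ and $\Phi(-h)$. Hence the part of $\mathcal{E}(h)$ of order $\ge p+1$ necessarily contains products of Taylor coefficients of $\Phi$ with those of the exponential.

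In the majorant these are exactly the mixed powers $\sigma^k 2^{-(p+1-k)}$, with $\sigma=\sum_{i=1}^s|\alpha_i|$. They assemble into $2(\sigma+\frac12)^{p+1}=2^{-p}(1+2\sigma)^{p+1}$, and the counterexample shows they are genuinely present. Even your own tally of the $\Phi$ remainders, $2\sigma^{p+1}(a_1+a_2)^{p+1}/(p+1)!$, equals $2^{-p}\cdot 2^{p+1}\sigma^{p+1}$, not $2^{-p}\cdot 2\sigma^{p+1}$. The proposed fix of rescaling $h\mapsto h/2$ inside $\Phi$ is not available, because $\Phi$, and hence $\sigma$, is fixed by the scheme.
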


\subsection{Global error}
Up to this point, we have examined the error produced after a single integration step. However, for practical applications it is also important 
to understand how this error accumulates over multiple steps, say after $k$ iterations.
The resulting global error can be estimated as follows. We have
\[
  \|u_k - u(t_k)\| = \| \Psi^k(h) u_0 - \e^{k h A} u_0\|,
\]
but we can write
\[
\Psi^k(h) u_0 - \e^{k h A} u_0 = \sum_{j=0}^{k-1} \Psi^{k-1-j}(h) \, \big(\Psi(h) - \e^{h A} \big) \, \e^{j h A} u_0.
\]
Since both $\Psi(h)$ and $\e^{h A}$ are unitary (so that $\|\Psi^m(h) v\| = \|v\|$ for any vector $v \in \mathcal{H}$), we obtain
\[
\begin{aligned}
\| \Psi^k(h) u_0- \e^{k h A} u_0\| & \leq \sum_{j=0}^{k-1} \|\big(\Psi(h) - \e^{h A}\big) \, \e^{j h A} u_0\| 
\leq \sum_{j=0}^{k-1} \|\Psi(h) - \e^{h A}\| \, \|\e^{j h A} u_0\| \\
& = k \, \| \Psi(h) - \e^{h A} \| \|u_0\|,
\end{aligned}
\]
so that, in the unitary case, the global error after $k$ steps can be bounded by $k$ times the local error.

\

Given two composition schemes, both of order $p$ but possibly with different numbers $s$ of stages, it is customary to compare their theoretical efficiency using the concept of {\em effective error} \cite{mclachlan95otn,blanes99siw,mclachlan02foh}. For the particular class of methods of the form \eqref{eq:comp_Strang}, it can be defined as
\begin{equation} \label{valueC}
   E_f  \equiv s \, \mathcal{C}^{1/p}, \qquad \text{where} \quad \mathcal{C} = \frac{1}{(p+1)!} \left[ 1 +  \left( \sum_{i=1}^s |\gamma_i| \right)^{p+1} \right].
\end{equation}

Indeed, suppose we apply a method of order $p$ with $s$ stages to approximate the exponential $\e^{A_1+\cdots+A_N}$ using $n$ Strang-type compositions, i.e., $\e^{A_1+\cdots+A_N} \approx \Psi(h)^k$ with $k = n/s$ and $h = s/n$. According to Theorem~\ref{th_compo},
\begin{equation}
\label{eq:global_error_estimate}
\|\e^{A_1+\cdots+A_N} - \Psi(h)^k\| \leq \frac{(a_1 + \cdots + a_N)^{p+1}}{n^p}\, \mathcal{C}\, s^p.
\end{equation}

For methods of the same order $p$, comparison amounts to evaluating the factor $s^p \mathcal{C}$ or, equivalently, the effective error $E_f = s \, \mathcal{C}^{1/p}$. The smaller the value of $E_f$, the smaller the global error bound for a fixed number $n$ of compositions.

For composition schemes of the form \eqref{eq:comp_Strang} that satisfy the symmetry condition \eqref{eq:palindromic_gamma}, 
Theorem~\ref{th_compo_sym} shows that a sharper estimate of the global error \eqref{eq:global_error_estimate} can be obtained by replacing $\mathcal{C}$ with the smaller constant
\begin{equation} \label{valueC_sym}
 \mathcal{C}_{\mathrm{sym}} = \frac{2^{-p}}{(p+1)!} \left(1 + \sum_{i=1}^s |\gamma_i| \right)^{p+1}.
\end{equation}
It then makes sense to redefine the effective error as
\begin{equation}
\label{eq:Ef_sym}
E_f  \equiv s \, \mathcal{C}_{\mathrm{sym}}^{1/p}.
\end{equation}


Table~\ref{tableSS} presents the values of $E_f$ for several of the most relevant schemes of this class available in the literature. For each order $p$, the methods are distributed according to their number of stages $\mathbf{s}$. In particular, the two fourth-order schemes collected in Table~\ref{tableSS} correspond to the well-known triple jump ($\mathbf{s} = 3$),
\begin{equation} \label{triple}
   S_2(\gamma h) \, S_2((1 - 2\gamma) h) \, S_2(\gamma h), \qquad \text{with} \quad \gamma = \frac{1}{2 - 2^{1/3}},
\end{equation}
and the quintuple jump composition ($\mathbf{s} = 5$),
\begin{equation} \label{quintuple}
S_2(\gamma h) \, S_2(\gamma h) \, S_2((1 - 4\gamma) h) \, S_2(\gamma h) \, S_2(\gamma h), \qquad \text{with} \quad 
\gamma = \frac{1}{4 - 4^{1/3}}.
\end{equation}
The boxed numbers in Table~\ref{tableSS} indicate the recommended schemes from \cite{blanes24smf}, based on the numerical experiments presented therein. For more details and references, we refer the reader to \cite[Table 8.1]{blanes24smf}. We can see the strong correlation between the theoretical estimates obtained
here and the performance of the methods in practice.

\begin{table}
  \centering
\begin{tabular}{|l|l|l|l|} \hline
\multicolumn{4}{|c|}{\bfseries  $\Psi(h) = S_2(\gamma_s h) \, S_2(\gamma_{s-1} h) \, \cdots \, S_2(\gamma_2 h) \, S_2(\gamma_1 h)$}
\\ \hline
$p=4$ & $p=6$ & $p=8$ & $p=10$ \\ \hline
\textbf{3}--$\; 2.892$ & \textbf{7}--$\; 6.454$ & {\bf 15}--$\; 12.489$ & {\bf 31}--$\; 24.867$ \\
\framebox[3mm]{\bf 5}--$\; 2.166$ & {\bf 9}--$\; 5.199$ & {\bf 17}--$\; 11.756$ & \framebox[4.5mm]{\bf 35}--$\; 22.548$ \\
      & \framebox[4.5mm]{\bf 13}--$\; 4.361$ & \framebox[4.5mm]{\bf 21}--$\; 9.170$ & \\ \hline
\end{tabular}
  \caption{Value of the effective error $E_f = s \, \mathcal{C}_{\mathrm{sym}}^{1/p}$, where $\mathcal{C}_{\mathrm{sym}}$ is defined in \eqref{valueC_sym}, for each symmetric composition $\Psi(h)$ of order $p = 4, 6, 8, 10$ with $\mathbf{s}$ stages. The recommended methods, according to \cite{blanes24smf}, are boxed. In all cases, they all possess the smallest value of $E_f$.}
\label{tableSS}
\end{table}

\section{Improved bounds based on the norm of the operators}

The local error bounds obtained in section \ref{sec.2} are essentially based on Proposition~\ref{p:boundsB}  with $q=p$, where $p$ is the order of accuracy of the method. It turns out, however, that improved bounds can be obtained by estimating 
explicitly the contributions arising from the terms of order $p+1$ through $q$, for a given $q \geq p$, and then applying again Proposition~\ref{p:boundsB}. This will lead to estimates of the form
\[
  \left\|\Psi(h)-\e^{h \sum_{j=1}^N A_j} \right\| \le \sum_{n=p+1}^{q} d_n \, \lambda^n +  \mathcal{C}_{q+1} \, \lambda^{q+1},
\]
where $\lambda \equiv h (a_1 + a_2 + \cdots + a_N)$ and $d_{p+1}, \ldots, d_{q}, \mathcal{C}_{q+1}$ depending on the coefficients of the particular scheme.

 Whereas $C_{q+1}$ has been already obtained in the previous section for different classes of schemes, we now determine explicitly the
coefficients $d_n$ for compositions of the basic Lie--Trotter scheme and the Strang splitting. As before, 
we analyze two different settings: first, general compositions, and then methods with a palindromic sequence of coefficients.

\subsection{Series expansions with multi-indices}
To begin with, we express the basic map~\eqref{eq:chi} as a power series in $h$:
\[
\chi(h) = I + h X_1 + h^2 X_2 + h^3 X_3 + \cdots,
\]
where
\[
X_n = \sum_{r_1 + \cdots + r_N = n} \frac{1}{r_1! \cdots r_N!} A_1^{r_1} \cdots A_N^{r_N}, \qquad n \ge 1.
\]
The first two terms are
\[
X_1 = A = 
A_1 + \cdots + A_N, \qquad
X_2 = \sum_{i=1}^N \left( \frac{1}{2} A_i^2 + A_i \sum_{j=i+1}^{N} A_j \right).
\]

As shown in~\cite{blanes24smf}, the composition $\Psi(h)$ defined in~\eqref{eq:Psi_alpha} admits a series expansion of the form
\begin{equation} \label{expan_Psi}
 \Psi(h) = I + \sum_{n \ge 1} h^n \sum_{m \ge 1} \sum_{i_1 + \cdots + i_m = n} w_{i_1,\ldots,i_m}(\alpha_1,\ldots,\alpha_{2s}) \,  X_{i_m} \cdots X_{i_1},
\end{equation}
where the coefficient functions $w_{i_1,\ldots,i_m}$ are recursively defined as
\begin{equation}
  \label{eq:wfun}
  \begin{split}
w_{i}(\alpha_1,\ldots,\alpha_{2\ell}) &=
   \sum_{j = 1}^s \left(\alpha_{2j}^{i} - (-\alpha_{2j-1})^{i} \right), \\
w_{i_1,\ldots,i_m}(\alpha_1,\ldots,\alpha_{2\ell}) &=
    \sum_{j = 1}^s \left(\alpha_{2j}^{i_m} - (-\alpha_{2j-1})^{i_m} \right) w_{i_1,\ldots,i_{m-1}}(\alpha_1,\ldots,\alpha_{2j-1}),\\
w_{i_1,\ldots,i_m}(\alpha_1,\ldots,\alpha_{2\ell-1}) &= w_{i_1,\ldots,i_m}(\alpha_1,\ldots,\alpha_{2\ell-1},0).
\end{split}
\end{equation}

Notice that the exact solution of eq. \eqref{basic_eq} can also be written in the form \eqref{expan_Psi}, namely
\begin{equation} \label{exact_sol}
\begin{split}
\exp(h A) &= I + \sum_{n \ge 1} \frac{h^n}{n!} A^n \\
&= I + \sum_{n \ge 1} h^n \sum_{m \ge 1} \sum_{i_1 + \cdots + i_m = n} \epsilon_{i_1,\ldots,i_m} \,  X_{i_m} \cdots X_{i_1},
\end{split}
\end{equation}
with
\begin{equation}
\label{eq:epsilon}
\epsilon_{i_1,\ldots,i_m} =
\begin{cases}
\frac{1}{m!} & \text{if } (i_1,\ldots,i_m) = (1, \ldots, 1), \\
0 & \text{otherwise}.
\end{cases}
\end{equation}

Subtracting~\eqref{exact_sol} from \eqref{expan_Psi} gives
\begin{equation} \label{diff_compo}
\Psi(h) - \exp(h A) = \sum_{n \ge 1} h^n \sum_{m \ge 1} \sum_{i_1 + \cdots + i_m = n} \delta_{i_1,\ldots,i_m}(\alpha_1,\ldots,\alpha_{2s}) \,  X_{i_m} \cdots X_{i_1},
\end{equation}
where
\begin{equation}
\label{eq:delta}
\delta_{i_1,\ldots,i_m}(\alpha_1,\ldots,\alpha_{2s}) = w_{i_1,\ldots,i_m}(\alpha_1,\ldots,\alpha_{2s}) - \epsilon_{i_1,\ldots,i_m}.
\end{equation}
Thus, the product formula~\eqref{eq:Psi_alpha} is a method of order $p$ if and only if
\begin{equation}
\label{eq:ocond}
\delta_{i_1,\ldots,i_m}(\alpha_1,\ldots,\alpha_{2s}) = 0
\end{equation}
for all multi-indices $(i_1,\ldots,i_m)$ satisfying $1 \le i_1 + \cdots + i_m \le p$.

Of course, in the case of the composition of Strang splittings \eqref{eq:comp_Strang}, the corresponding expansion is
\begin{equation}
\label{eq:error_expansion}
\Psi(h) - \exp(h A) = \sum_{n \ge 1} h^n \sum_{m \ge 1} \sum_{i_1 + \cdots + i_m = n} \hat{\delta}_{i_1,\ldots,i_m}(\gamma_1,\ldots,\gamma_s) \,  X_{i_m} \cdots X_{i_1},
\end{equation}
with
\begin{equation}
\label{eq:hdelta}
\hat{\delta}_{i_1,\ldots,i_m}(\gamma_1,\ldots,\gamma_s) = 
\delta_{i_1,\ldots,i_m} \left(\frac{\gamma_1}{2},\frac{\gamma_1}{2},\frac{\gamma_2}{2},\frac{\gamma_2}{2},\ldots,\frac{\gamma_s}{2}, \frac{\gamma_s}{2} \right).
\end{equation}

\subsection{Refined local error estimates for general sequences of coefficients}

We now focus on bounding the term of order $n$ in the local error expansion \eqref{diff_compo} for $n = p+1, \ldots, q$. Under the assumption that $\|A_j\| = a_j < \infty$ for $j = 1, \ldots, N$, it is straightforward to prove by induction that
\begin{equation}
\label{eq:boundXn}
\|X_n\| \leq \frac{(a_1 + \cdots + a_N)^n}{n!}, \qquad n \geq 1.
\end{equation}
Therefore, the contribution of order $n$ in the expansion \eqref{diff_compo} satisfies
\[
\left\|
h^n \sum_{m \geq 1} \sum_{i_1 + \cdots + i_m = n}  \delta_{i_1,\ldots,i_m}(\alpha_1,\ldots,\alpha_{2s}) \,  X_{i_m} \cdots X_{i_1}
\right\| \leq 
d_n(\alpha_1,\ldots,\alpha_{2s}) \, \left(h \sum_{j=1}^N a_j \right)^n,
\]
where for each $n \geq 1$, we define
\begin{equation}
\label{eq:dn}
d_n(\alpha_1,\ldots,\alpha_{2s}) = \sum_{m \geq 1} \sum_{i_1 + \cdots + i_m = n} \frac{|\delta_{i_1,\ldots,i_m}(\alpha_1,\ldots,\alpha_{2s})|}{i_1! \cdots i_m!}.
\end{equation}
In consequence, we have the following refined estimate.

\begin{theorem} \label{th_compo_refined_G}
Let
\[
   \Psi(h) = \chi(\alpha_{1} h) \, \chi^*(\alpha_{2} h) \cdots \chi(\alpha_2 h) \, \chi^*(\alpha_1 h), 
\]
where $(\alpha_1, \alpha_2, \ldots, \alpha_s) \in \mathbb{R}^s$ and
\[
  \chi(h) = \e^{h A_1} \, \e^{h A_2} \cdots \e^{h A_N}, \qquad \chi^*(h) = \e^{h A_N} \cdots \e^{h A_2} \, \e^{h A_1}, 
\]
be a $2s$-stage composition of order $p$ approximating $\e^{h(A_1 + \cdots + A_N)}$, where each $A_j$ is skew-adjoint and $\|A_j\| = a_j  <\infty$ for $j = 1,\ldots, N$. Then, for $q> p$, the local error satisfies
\[
\left\| \Psi(h) - \e^{h \sum_{j=1}^N A_j} \right\| \leq
\sum_{n = p+1}^{q} d_n(\alpha_1,\ldots,\alpha_{2s}) \, \lambda^n +
\frac{1}{(q+1)!} \left[ 1 + 2 \left( \sum_{i=1}^{2s} |\alpha_i| \right)^{q+1} \, \right] \lambda^{q+1},
\]
where $\lambda = h (a_1 + \cdots + a_N)$ and $d_n(\alpha_1,\ldots,\alpha_{2s})$ is given by \eqref{eq:dn}. 
\end{theorem}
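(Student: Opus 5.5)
The plan is to split the local error into the explicit terms of orders $p+1$ through $q$ and a remainder of order $q+1$, bounding each part separately. Write $\Psi(h)=\e^{hC_r}\cdots\e^{hC_1}$ with $r=2Ns$, using the operators $C_j$ introduced before Theorem~\ref{th_compoLT}; recall that there $\sum_j\|C_j\| = \big(\sum_{i=1}^{2s}|\alpha_i|\big)\sum_k a_k$.

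First apply Proposition~\ref{p:boundsB} with this $q$ to $\Psi(h)$:
\[
\Psi(h)=P_q(h)+h^{q+1}\mathcal R_{q+1}(h),
\]
where $P_q$ is the Taylor polynomial of degree $q$. Its remainder satisfies $\|h^{q+1}\mathcal R_{q+1}(h)\|\le \big(\sum_i|\alpha_i|\big)^{q+1}\lambda^{q+1}/(q+1)!$. Do the same for the exact flow $\e^{hA}$, viewed as a single exponential with $\|A\|\le \sum a_j$, which gives a remainder bounded by $\lambda^{q+1}/(q+1)!$.

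Next, identify the polynomial parts. The degree-$n$ homogeneous part of $P_q(h)-\sum_{n\le q}h^nA^n/n!$ coincides with the degree-$n$ part of expansion \eqref{diff_compo}, because both are the coefficient of $h^n$ in the (absolutely convergent, bounded-operator) power series of $\Psi(h)-\e^{hA}$. For $n\le p$ these terms vanish by the order conditions \eqref{eq:ocond}. For $p+1\le n\le q$, the triangle inequality together with \eqref{eq:boundXn} bounds each term by $d_n\lambda^n$, which is exactly the estimate displayed before the theorem.

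The triangle inequality then gives
\[
\sum_{n=p+1}^q d_n\lambda^n+\frac{1+\big(\sum_i|\alpha_i|\big)^{q+1}}{(q+1)!}\lambda^{q+1}.
\]
This is at most the stated bound, since $\big(\sum|\alpha_i|\big)^{q+1}\le 2\big(\sum|\alpha_i|\big)^{q+1}$. The stated factor of $2$ is therefore harmless slack.

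The only delicate point is justifying that the $h^n$ coefficients from the $C_j$-expansion \eqref{eq:expB2} coincide with those written via the $X_i$ and $w$'s. This follows from uniqueness of power-series coefficients in $h$ for analytic operator-valued functions, since both expansions are of the same entire function $h\mapsto\Psi(h)$.
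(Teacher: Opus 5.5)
Your proof is correct and follows the paper's (implicit) argument: it is Corollary~\ref{c:expB_le} with this $q$, combined with identifying the $h^n$ coefficients with those of \eqref{diff_compo}, noting that they vanish for $n\le p$, and bounding them by $d_n\lambda^n$ for $p+1\le n\le q$. As you note, this gives the sharper constant $1+\big(\sum_i|\alpha_i|\big)^{q+1}$, which matches Corollary~\ref{th_compo_refined_S}, so the factor $2$ in the statement is only slack.
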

If a composition of the Strang splitting is considered instead, the corresponding estimate is the following:

\begin{corollary} \label{th_compo_refined_S}
Let
\[
\Psi(h) = S_2(\gamma_s h) \cdots S_2(\gamma_2 h) S_2(\gamma_1 h),
\]
where $(\gamma_1, \ldots, \gamma_s) \in \mathbb{R}^s$ and $S_2(h) = \chi(h/2)\chi^*(h/2)$ with
\[
\chi(h) = \mathrm{e}^{h A_1} \cdots \mathrm{e}^{h A_N}, \qquad 
\chi^*(h) = \mathrm{e}^{h A_N} \cdots \mathrm{e}^{h A_1},
\]
be an $s$-stage composition of order $p$ approximating $\mathrm{e}^{h(A_1 + \cdots + A_N)}$, where each $A_j$ is skew-adjoint and $\|A_j\| = a_j  <\infty$ for $j = 1,\ldots, N$. Then, for $q \geq p$, the local error satisfies
\[
\left\| \Psi(h) - \mathrm{e}^{h \sum_{j=1}^N A_j} \right\| \leq
\sum_{n = p+1}^{q} d_n(\gamma_1,\ldots,\gamma_s) \, \lambda^n +
\frac{1}{(q+1)!} \left[ 1 + \left( \sum_{i=1}^s |\gamma_i| \right)^{q+1} \, \right] \lambda^{q+1},
\]
where $\lambda = h (a_1 + \cdots + a_N)$ and
\[
  d_n(\gamma_1,\ldots,\gamma_{s}) = \sum_{m \geq 1} \sum_{i_1 + \cdots + i_m = n} \frac{|\hat{\delta}_{i_1,\ldots,i_m}(\gamma_1,\ldots,\gamma_{ss})|}{i_1! \cdots i_m!}.
\]
\end{corollary}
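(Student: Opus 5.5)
The plan is to treat the Strang composition as a particular instance of \eqref{eq:Psi_alpha}, with coefficients $(\alpha_1,\ldots,\alpha_{2s}) = (\gamma_1/2,\gamma_1/2,\ldots,\gamma_s/2,\gamma_s/2)$. We then repeat the argument behind Theorem~\ref{th_compo_refined_G}, taking care with the remainder constant.

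First, we write $\Psi(h) = \e^{hC_r}\cdots\e^{hC_1}$ with $r=2Ns$, where the $C_j$ are the scaled operators $\alpha_{2j-1}A_k$ and $\alpha_{2j}A_{N-k+1}$. We apply Proposition~\ref{p:boundsB} with this $q$ to $\Psi(h)$. We also expand $\e^{hA}$ to order $q$ with its Taylor remainder, whose norm is at most $\lambda^{q+1}/(q+1)!$ because $\|\e^{\tau A}\|=1$. Subtracting, the terms of order $n\le q$ are exactly the order-$n$ truncation of \eqref{eq:error_expansion}, namely
\[
h^n\sum_{m}\sum_{i_1+\cdots+i_m=n}\hat\delta_{i_1,\ldots,i_m}(\gamma_1,\ldots,\gamma_s)\,X_{i_m}\cdots X_{i_1}.
\]
This identification holds because both expansions are the same formal power series in $h$. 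Since the method has order $p$, these terms vanish for $n\le p$.

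For $p+1\le n\le q$, we use the bound \eqref{eq:boundXn}, $\|X_i\|\le (a_1+\cdots+a_N)^i/i!$, together with submultiplicativity of the norm. Summing over multi-indices then bounds each order-$n$ term by $d_n(\gamma_1,\ldots,\gamma_s)\,\lambda^n$, with $d_n$ defined through $\hat\delta$ as in the statement.

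For the remainder, Proposition~\ref{p:boundsB} gives the factor $(\sum_j\|C_j\|)^{q+1}/(q+1)!$. Here
\[
\sum_j\|C_j\| = \Big(\sum_{i=1}^{2s}|\alpha_i|\Big)\sum_k a_k = \Big(\sum_{i=1}^s|\gamma_i|\Big)\sum_k a_k,
\]
since each $\gamma_i/2$ appears twice. Adding the exact-flow remainder $\lambda^{q+1}/(q+1)!$ gives the stated factor $\big[1+(\sum_i|\gamma_i|)^{q+1}\big]/(q+1)!$. The constant is $1$ rather than the $2$ appearing in Theorem~\ref{th_compo_refined_G}, so we take it directly from Corollary~\ref{c:expB_le} with this $q$.

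The main obstacle is the bookkeeping at the step where the order-$\le q$ terms of Corollary~\ref{c:expB_le}, which are written in the $C_j$ variables, are matched with the $X$-expansion \eqref{eq:error_expansion}. Both are truncations of the same analytic function of $h$, so their coefficients agree. Beyond that, the argument only requires the triangle inequality. The case $q=p$ reduces to Corollary~\ref{th_compoS}.
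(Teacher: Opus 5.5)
Your proposal is correct and follows the same route as the paper, which gives no separate proof and simply specializes its general argument. That argument applies Corollary~\ref{c:expB_le} (equivalently Proposition~\ref{p:boundsB} plus the Taylor remainder of $\e^{hA}$) with the given $q$ to the $2Ns$-factor product with $(\alpha_1,\ldots,\alpha_{2s})=(\gamma_1/2,\gamma_1/2,\ldots,\gamma_s/2,\gamma_s/2)$, identifies the order-$n$ terms with \eqref{eq:error_expansion}, bounds them by $d_n\lambda^n$ via \eqref{eq:boundXn}, and uses $\sum_j\|C_j\|=(\sum_i|\gamma_i|)\sum_k a_k$ for the remainder. You are also right that the remainder constant must be taken directly from Corollary~\ref{c:expB_le}, which gives $1$, since specializing Theorem~\ref{th_compo_refined_G} would only yield its weaker factor $2$.
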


\subsection{Refined local error estimates for palindromic sequences of coefficients}

Recall that composition schemes of the form \eqref{palindromic_alpha} admit the factorization \eqref{eq:sym_factorization}--\eqref{eq:Phi_s_even} and the local error can be bounded according to Lemma~\ref{l:Theta} as
\[
\| \Psi(h) - \mathrm{e}^{h (A_1 + \cdots + A_N)} \| \leq \|\Theta(h)-\Theta(-h)\|,
\]
where $\Theta(h)=\Phi(h) \, \e^{-\frac{h}{2} (A_1 + \cdots + A_N)}$. Expanding $\Phi(h)$ yields
\[
\Phi(h) = I + \sum_{n \geq 1} h^n \sum_{m \geq 1} \sum_{i_1 + \cdots + i_m = n} w_{i_1, \ldots, i_m}(\alpha_1, \ldots, \alpha_s) \, X_{i_m} \cdots X_{i_1},
\]
and similarly,
\begin{align*}
\exp\left(-\tfrac{h}{2} A\right) &= I + \sum_{n \geq 1} (-1)^n \frac{h^n}{2^n n!} A^n \\
&= I + \sum_{n \geq 1} h^n \sum_{m \geq 1} \sum_{i_1 + \cdots + i_m = n} \left(-\tfrac{1}{2}\right)^{i_1 + \cdots + i_m} \epsilon_{i_1, \ldots, i_m} \, X_{i_m} \cdots X_{i_1}.
\end{align*}
Therefore,
\begin{align*}
\Theta(h) &= \Phi(h)\, \exp\left(-\tfrac{h}{2} A\right) \\
&= I + \sum_{n \geq 1} h^n \sum_{m \geq 1} \sum_{i_1 + \cdots + i_m = n} v_{i_1, \ldots, i_m}(\alpha_1, \ldots, \alpha_s) \, X_{i_m} \cdots X_{i_1},
\end{align*}
where
\begin{align*}
v_{i_1, \ldots, i_m}(\alpha_1, \ldots, \alpha_s) &= w_{i_1, \ldots, i_m}(\alpha_1, \ldots, \alpha_s) + \left(-\tfrac{1}{2}\right)^{i_1 + \cdots + i_m} \epsilon_{i_1, \ldots, i_m} \\
&\quad + \sum_{j=1}^{m-1} \left(-\tfrac{1}{2}\right)^{i_{1} + \cdots + i_j}\epsilon_{i_1, \ldots, i_j}  \, w_{i_{j+1}, \ldots, i_m}(\alpha_1, \ldots, \alpha_s),
\end{align*}

This yields the series expansion
\[
\Theta(h) - \Theta(-h) = 2 \sum_{n \geq 1} h^{2n-1} \sum_{m \geq 1} \sum_{i_1 + \cdots + i_m = 2n-1} v_{i_1, \ldots, i_m}(\alpha_1, \ldots, \alpha_s)\, X_{i_m} \cdots X_{i_1}.
\]
Bounding the remainder of order $m$ as in Theorem~\ref{th_compo_refined_G} and accounting for the odd powers in $h$ leads to the following results:

\begin{theorem} \label{th_compo_sym_refined_G}
Suppose that the time-symmetric $2s$-stage composition \eqref{palindromic_alpha} 
\[
   \Psi(h) = \chi(\alpha_{1} h) \, \chi^*(\alpha_{2} h) \cdots \chi(\alpha_2 h) \, \chi^*(\alpha_1 h), 
\]
with $(\alpha_1, \alpha_2, \ldots, \alpha_s) \in \mathbb{R}^s$ and
\[
  \chi(h) = \e^{h A_1} \, \e^{h A_2} \cdots \e^{h A_N}, \qquad \chi^*(h) = \e^{h A_N} \cdots \e^{h A_2} \, \e^{h A_1}, 
\]
 provides an approximation of even order $p$ to $\e^{h(A_1 + \cdots + A_N)}$,
with $A_j$ skew-adjoint operators verifying that $\|A_j\| = a_j < \infty$ for $j=1,\ldots, N$. Then, for $q\geq p$, the local error satisfies
\begin{equation*} 
  \left\|\Psi(h)-\e^{h \sum_{j=1}^N A_j} \right\| \le \sum_{n = p+1}^{q} d_n^{\mathrm{sym}}(\alpha_1,\ldots,\alpha_s)\, \lambda^n +
\frac{\lambda^{q+1}}{(q+1)! \, 2^{q}} \left(1 + 2 \sum_{i=1}^s |\alpha_i| \right)^{q+1},
\end{equation*}
where $\lambda = h (a_1 + \cdots + a_N)$ and
\begin{equation}
\label{eq:dn_sym}
d_n^{\mathrm{sym}}(\alpha_1,\ldots,\alpha_s) = 2 \sum_{m \geq 1} \sum_{i_1 + \cdots + i_m = n} \frac{|v_{i_1, \ldots, i_m}(\alpha_1, \ldots, \alpha_s)|}{i_1! \cdots i_m!}
\end{equation}
if $n$ is odd and $d_n^{\mathrm{sym}}(\alpha_1,\ldots,\alpha_s)=0$ if $n$ is even.
\end{theorem}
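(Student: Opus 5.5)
By Lemma~\ref{l:Theta}, it is enough to bound $\|\mathcal{E}(h)\| = \|\Theta(h)-\Theta(-h)\|$, where $\Theta(h)=\Phi(h)\,\e^{-\frac{h}{2}A}$ and $\Phi$ is given by \eqref{eq:Phi_s_odd}--\eqref{eq:Phi_s_even}. The plan has two parts. First, truncate the expansion of $\Theta(\pm h)$ at order $q$ and read off the terms of orders $p+1,\ldots,q$ from the multi-index series. Second, bound the two remainders with Proposition~\ref{p:boundsB}, exactly as in the proof of Theorem~\ref{th_compo}.

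\textbf{Structure of $\Theta$.} As in the proof of Theorem~\ref{th_compo}, write $\Theta(h)=\e^{hC_r}\cdots\e^{hC_1}$. The $C_j$ are bounded and skew-adjoint: the last factor is $-\frac12 A$, split into pieces $-\frac12 A_j$. Then
\[
\|C_1\|+\cdots+\|C_r\| = \Big(\tfrac12+\sum_{i=1}^s|\alpha_i|\Big)\sum_j a_j .
\]
Apply Proposition~\ref{p:boundsB} with the given $q$ to both $\Theta(h)$ and $\Theta(-h)$. Replacing $h$ by $-h$ just replaces each $C_j$ by $-C_j$, so the norms are unchanged. This gives
\[
\Theta(\pm h) = I + \sum_{n=1}^q (\pm h)^n T_n + h^{q+1}\mathcal{R}^{\pm}_{q+1}(h), \qquad
\|\mathcal{R}^{\pm}_{q+1}\|\le \frac{(\frac12+\sum|\alpha_i|)^{q+1}(\sum a_j)^{q+1}}{(q+1)!}.
\]
Here $T_n$ is the degree-$n$ part of the product expansion. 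It coincides with the degree-$n$ coefficient of the formal series for $\Theta(h)$ derived above:
\[
T_n=\sum_{m\ge1}\sum_{i_1+\cdots+i_m=n} v_{i_1,\ldots,i_m}\,X_{i_m}\cdots X_{i_1}.
\]
Both expansions are convergent power series in $h$ for bounded operators, so their coefficients agree.

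\textbf{Subtracting.} Taking the difference, even $n$ cancel and odd $n$ double:
\[
\mathcal{E}(h) = 2\sum_{n\le q,\ n \text{ odd}} h^n T_n + h^{q+1}\big(\mathcal{R}^+_{q+1}-\mathcal{R}^-_{q+1}\big).
\]
Order $p$ means $\mathcal{E}(h)=\mathcal{O}(h^{p+1})$, so $T_n=0$ for odd $n\le p$, and only odd $n$ with $p+1\le n\le q$ survive. For these terms, use \eqref{eq:boundXn}, $\|X_i\|\le(\sum a_j)^i/i!$, together with the triangle inequality. This gives
\[
2\|h^nT_n\|\le d_n^{\mathrm{sym}}\lambda^n,
\]
with $d_n^{\mathrm{sym}}$ as in \eqref{eq:dn_sym}, and $d_n^{\mathrm{sym}}=0$ for even $n$.

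\textbf{Remainder.} The two remainders together contribute at most
\[
\frac{2\big(\frac12+\sum|\alpha_i|\big)^{q+1}}{(q+1)!}\,\lambda^{q+1}
=\frac{\lambda^{q+1}}{(q+1)!\,2^{q}}\Big(1+2\sum_{i=1}^s|\alpha_i|\Big)^{q+1},
\]
which is exactly the stated term.

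\textbf{Main obstacle.} The delicate step is the bookkeeping that identifies the degree-$n$ term of the concrete product $\e^{hC_r}\cdots\e^{hC_1}$ in Proposition~\ref{p:boundsB} with the $X$-expansion carrying coefficients $v_{i_1,\ldots,i_m}$. This requires the $w$-expansion of $\Phi(h)$ (with the truncated coefficient sequence $\alpha_1,\ldots,\alpha_s$ and the correct $\chi/\chi^*$ alternation) combined with the expansion of $\e^{-hA/2}$ in the $X$'s. I would handle it by noting that both expressions are the same analytic operator function of $h$. Hence their Taylor coefficients coincide, and no combinatorial verification beyond the formula for $v$ already derived is needed.
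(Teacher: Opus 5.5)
Your proposal is correct and follows essentially the same route as the paper: Lemma~\ref{l:Theta}, the $v$-expansion of $\Theta(\pm h)$ truncated at order $q$ with the surviving odd-order terms bounded via \eqref{eq:boundXn}, and the two remainders bounded by Proposition~\ref{p:boundsB} exactly as in the proof of Theorem~\ref{th_compo}. One small correction: $\e^{-\frac{h}{2}A}$ cannot be split into factors $\e^{-\frac{h}{2}A_j}$. Keep it as a single factor $C_1=-\frac12 A$ with $\|C_1\|\le\frac12\sum_j a_j$; this turns your equality for $\sum_j\|C_j\|$ into an inequality and leaves the final bound unchanged.
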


\begin{corollary} \label{th_compo_sym_refined}
Let $(\gamma_1, \ldots, \gamma_s) \in \mathbb{R}^s$ satisfy the symmetry condition \eqref{eq:palindromic_gamma}, and let
\[
\Psi(h) = S_2(\gamma_s h) \cdots S_2(\gamma_2 h) S_2(\gamma_1 h),
\]
where $S_2(h) = \chi(h/2)\chi^*(h/2)$ with
\[
\chi(h) = \mathrm{e}^{h A_1} \cdots \mathrm{e}^{h A_N}, \qquad 
\chi^*(h) = \mathrm{e}^{h A_N} \cdots \mathrm{e}^{h A_1},
\]
be a symmetric composition of even order $p$ approximating $\mathrm{e}^{h(A_1 + \cdots + A_N)}$, where each $A_j$ is skew-adjoint and $\|A_j\| \le a_j$. Then, for $q \geq p$, the local error satisfies
\[
\left\| \Psi(h) - \mathrm{e}^{h \sum_{j=1}^N A_j} \right\| \leq 
\sum_{n = p+1}^{q} d_n^{\mathrm{sym}}(\gamma_1,\ldots,\gamma_s)\, \lambda^n +
\frac{\lambda^{q+1}}{(q+1)! \, 2^{q}} \left(1 + \sum_{i=1}^s |\gamma_i| \right)^{q+1},
\]
where $\lambda = h (a_1 + \cdots + a_N)$ and
\begin{equation}
\label{eq:dn_sym2}
d_n^{\mathrm{sym}}(\gamma_1,\ldots,\gamma_s) = 2 \sum_{m \geq 1} \sum_{i_1 + \cdots + i_m = n} \frac{|v_{i_1, \ldots, i_m}(\gamma_1, \ldots, \gamma_s)|}{i_1! \cdots i_m!}
\end{equation}
if $n$ is odd and $d_n^{\mathrm{sym}}(\gamma_1,\ldots,\gamma_s)=0$ if $n$ is even.
\end{corollary}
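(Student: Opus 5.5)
The plan is to view the palindromic Strang composition as a special case of the palindromic Lie--Trotter composition \eqref{palindromic_alpha}, and then repeat the argument of Theorem~\ref{th_compo_sym_refined_G}.

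Writing $S_2(\gamma_i h)=\chi(\tfrac{\gamma_i}{2}h)\chi^*(\tfrac{\gamma_i}{2}h)$ gives $\Psi(h)$ in the form \eqref{eq:Psi_alpha}. The coefficient vector is $(\gamma_1/2,\gamma_1/2,\ldots,\gamma_s/2,\gamma_s/2)$, which is palindromic exactly when $\gamma_{s+1-i}=\gamma_i$. Hence the symmetric factorization \eqref{eq:sym_factorization} holds, with $\Phi$ the product of the first half of the factors. When $s$ is odd, the middle Strang factor is split as $\chi(\tfrac{\gamma_m}{2}h)\cdot\chi^*(\tfrac{\gamma_m}{2}h)$, so $\Phi$ ends with $\chi^*(\tfrac{\gamma_m}{2}h)$.

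Lemma~\ref{l:Theta} then reduces the problem to bounding $\|\Theta(h)-\Theta(-h)\|$, where $\Theta(h)=\Phi(h)\e^{-hA/2}$.

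The first step is to expand $\Theta$ as in the preceding subsection. The coefficients $v_{i_1,\ldots,i_m}$ are now evaluated at the first half of the Strang-induced $\alpha$-sequence; this is what the notation $v_{i_1,\ldots,i_m}(\gamma_1,\ldots,\gamma_s)$ means, in analogy with \eqref{eq:hdelta}. In $\Theta(h)-\Theta(-h)$ the even powers cancel, which leaves $2\sum h^{2n-1}\sum v\,X_{i_m}\cdots X_{i_1}$. Order $p$ means these odd coefficients vanish for $n\le p$. For each odd $n$ with $p<n\le q$, I bound the $h^n$ term using \eqref{eq:boundXn} together with submultiplicativity, which gives exactly $d_n^{\mathrm{sym}}\lambda^n$. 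Even $n$ contribute nothing.

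The second step handles the remainder beyond $q$. I apply Proposition~\ref{p:boundsB} with index $q$ to $\Theta(h)$ and to $\Theta(-h)$ separately. Both are products $\e^{\pm hC_r}\cdots\e^{\pm hC_1}$ of skew-adjoint exponentials, namely the $C_j$ coming from $\Phi$ together with the final factor $-A/2$. Since the truncated sums $\sum_{n\le q}$ coincide with the series coefficients, the two remainders sum to at most $\tfrac{2h^{q+1}}{(q+1)!}(\sum\|C_j\|)^{q+1}$.

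The norm sum is computed as follows. $\Phi$ contains half of the $2s$ Strang half-steps, each weighted by $|\gamma_i|/2$, so its total weight is $\tfrac12\sum_i|\gamma_i|\sum_j a_j$. The factor $\e^{-hA/2}$ contributes at most $\tfrac12\sum_j a_j$, either by splitting it as $\prod_j \e^{-hA_j/2}$, or by using $\|A\|\le\sum a_j$ directly. For odd $s$, the middle $\gamma_m$ is split evenly, so the count is still exactly half. This gives $\sum\|C_j\|\le\tfrac12(1+\sum|\gamma_i|)\sum a_j$, and therefore a remainder of
\[
\frac{2}{(q+1)!}\,2^{-(q+1)}\Bigl(1+\sum_i|\gamma_i|\Bigr)^{q+1}\lambda^{q+1}=\frac{\lambda^{q+1}}{(q+1)!\,2^q}\Bigl(1+\sum_i|\gamma_i|\Bigr)^{q+1}.
\]

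The main obstacle is the bookkeeping in the middle factor when $s$ is odd, together with confirming that the $v$-coefficients are correctly identified with the halved sequence. The norm count itself is routine once the factorization is written out.
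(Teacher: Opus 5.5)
Your proposal is correct and follows essentially the paper's argument. You apply Lemma~\ref{l:Theta} to the half-sequence factor $\Phi$, bound the odd-degree terms $p<n\le q$ of the $v$-expansion via \eqref{eq:boundXn}, and control the remainder by applying Proposition~\ref{p:boundsB} to $\Theta(h)$ and $\Theta(-h)$ with $\sum_j\|C_j\|\le\frac{1}{2}\bigl(1+\sum_i|\gamma_i|\bigr)\sum_j a_j$.

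One slip: $\e^{-hA/2}$ cannot be rewritten as $\prod_j \e^{-hA_j/2}$, because the $A_j$ need not commute. Only your second option is valid, namely treating it as a single factor with $C=-\frac{1}{2}(A_1+\cdots+A_N)$ and $\|C\|\le\frac{1}{2}\sum_j a_j$, and this is also what the paper does.
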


\subsection{Numerical illustration of the refined estimates}
\label{sec.3.4}

To illustrate how the error estimates provided in this section actually improve the general bounds obtained in section \ref{sec.2}, we take one
particular method in Table \ref{tableSS}, namely the time-symmetric scheme \eqref{eq:palindromic_gamma} with $s=13$ and order $p=6$ 
of the form
\begin{equation} \label{sofro_spa}
  \Psi(h) = S_2(\gamma_1 h) \cdots S_2(\gamma_6 h) S_2(\gamma_7 h) S_2(\gamma_6 h) \cdots S_2(\gamma_1 h)
\end{equation}
proposed in
\cite{sofroniou05dos}. 

First of all, we take $q=7$, and compare the general estimates given by Corollaries \ref{th_compoS} and \ref{th_compo_sym}. Carrying out the calculation
with the corresponding coefficients, we get, respectively, 
   \[
   \begin{aligned}
       \left\| \Psi(h) - \mathrm{e}^{h \sum_{j=1}^N A_j} \right\| & \le 0.0912032792 \, \lambda^7 \qquad \mbox{(generic)} \\
     & \le 0.0162837163 \, \lambda^7 \qquad \mbox{(time-symmetric)}.
   \end{aligned}
   \]
We see then that the estimate obtained for time-symmetric schemes improves the general one by almost one order of magnitude. 
Then we apply Corollary \ref{th_compo_sym_refined} for time-symmetric compositions with $q=9$ and $q=11$, thus leading to
\[
   \left\| \Psi(h) - \mathrm{e}^{h \sum_{j=1}^N A_j} \right\|  \le P_q(\lambda),
\]   
 with 
 \[
 \begin{aligned}
   & P_9(\lambda) =   10^{-4} \left( 0.287348 \, \lambda^7 + 6.536152 \, \lambda^9 \right) \\
   & P_{11}(\lambda) = 10^{-5} \left( 2.87348 \, \lambda^7 + 0.390482 \, \lambda^9 + 
 1.71724 \, \lambda^{11} \right)
 \end{aligned}
 \]
These curves are depicted in Figure \ref{fig_qs}. As we can see,  we are able to improve the estimate provided by Corollary
\ref{th_compo_sym} in more than 3 orders of magnitude for a wide range of values of the normalized time-step $\lambda$. This same pattern 
is actually observed for all the methods collected in Table \ref{tableSS}.

\begin{figure}[htb]
\centering
\includegraphics[scale=1]{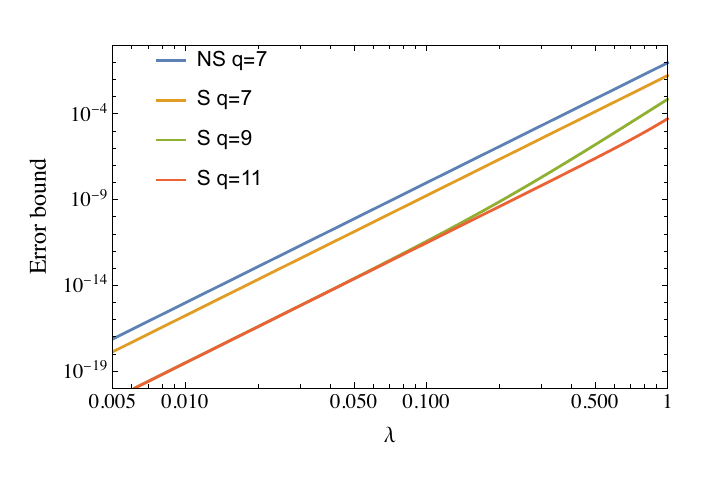} \\
\caption{\small{Error bounds for the time-symmetric 6th-order scheme \eqref{sofro_spa} with $s=13$. Lines correspond (from top to bottom) to the result provided by Corollary \ref{th_compoS} for a general composition of Strang splittings with $q=7$ (NS $q=7$), 
Corollary \ref{th_compo_sym} also with $q=7$ (S $q=7$), and the refined estimates provided by Corollary \ref{th_compo_sym_refined} for symmetric schemes with 
$q=9$ and $q=11$.}}
\label{fig_qs}
\end{figure}

\textbf{ALT TEX}: Graphical representation of different error bounds obtained in this work for a time-symmetric 6th-order composition method with 13 stages.

\section{Error bounds in terms of commutators}

While the previous method relies on computing power series expansions of both the exact solution and the numerical map, and then bounding the norm of their difference, the approach we now introduce is more indirect. Instead, we first derive the linear differential equation satisfied by the numerical flow. The error is then estimated by bounding the norm of the difference between the coefficient matrix of this equation and the operator $A = A_1 + \cdots + A_N$. 
In other words, rather than working with expansions in the unitary group, we consider expansions in the associated Lie algebra of skew-adjoint operators. This allows the resulting bounds to be expressed directly in terms of commutators of the operators involved. We now summarize the main steps of this approach.


\subsection{Estimates based on the modified equation satisfied by the splitting method}

As in Proposition~\ref{p:boundsB} and Corollary~\ref{c:expB_le}, we begin by considering the approximation of  $\e^{h (C_1+ \cdots + C_r)}$, where $C_1,\ldots,C_r$ are arbitrary skew-adjoint matrices, by the product of exponentials
\[
\e^{h C_r}\cdots \e^{h C_1}.
\]
Our goal is to estimate the norm of the difference
$\e^{h C_r}\cdots \e^{h C_1} - \e^{h (C_1+ \cdots + C_r)}$. We define 
\begin{equation}
\label{eq:M(t)}
M(t) = \left(\frac{d}{dt} \e^{t C_r}\cdots \e^{t C_1} \right) \e^{-t C_1}\cdots \e^{-t C_r}, 
\end{equation}
so that $\Psi(t)=\e^{t C_r}\cdots \e^{t C_1}$ is the solution operator of the initial value problem
\begin{equation} \label{modif1}
  \frac{d }{dt} \Psi(t) = M(t) \, \Psi(t), \qquad \Psi(0) = I.
\end{equation}

\begin{lemma} \label{lemma_estimate}
Given $r>1$ and arbitrary skew-adjoint matrices $C_1,\ldots,C_r$, then
\[
\|\e^{h C_r}\cdots \e^{h C_1} - \e^{h (C_1+ \cdots + C_r)}\| \leq 
\int_0^h \| M(t) - (C_1+\cdots+C_r)\| \, dt,
\]
where $M(t)$ is given by \eqref{eq:M(t)}.
\end{lemma}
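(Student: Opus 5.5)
The plan is the standard variation-of-constants (Duhamel/Alekseev--Gröbner) argument, with unitarity used to make all propagator factors disappear from the norm. Write $C = C_1+\cdots+C_r$ and $\Psi(t)=\e^{t C_r}\cdots \e^{t C_1}$. By \eqref{modif1}, $\Psi$ solves $\Psi'(t)=M(t)\Psi(t)$ with $\Psi(0)=I$. I would compare it with the exact flow $\e^{tC}$ by differentiating the auxiliary function
\[
F(t)=\e^{(h-t)C}\,\Psi(t), \qquad t\in[0,h].
\]
Its endpoint values are $F(0)=\e^{hC}$ and $F(h)=\Psi(h)$.

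First I verify that $F$ is differentiable and compute $F'$. Since the $C_j$ are matrices (bounded operators), every exponential is differentiable in norm. The product rule, together with \eqref{modif1}, gives
\[
F'(t)=\e^{(h-t)C}\big(M(t)-C\big)\Psi(t).
\]
Note that $M(t)$ is continuous in $t$. Indeed, explicitly,
\[
M(t)=C_r+\e^{tC_r}C_{r-1}\e^{-tC_r}+\cdots+\e^{tC_r}\cdots\e^{tC_2}\,C_1\,\e^{-tC_2}\cdots\e^{-tC_r},
\]
so the integrand below is continuous and the fundamental theorem of calculus applies.

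Integrating $F'$ over $[0,h]$ yields
\[
\Psi(h)-\e^{hC}=\int_0^h \e^{(h-t)C}\big(M(t)-C\big)\Psi(t)\,dt.
\]
I then take norms inside the integral. Both $\e^{(h-t)C}$ and $\Psi(t)$ are unitary, because $C$ and each $C_j$ are skew-adjoint, so each contributes a factor of norm one. This gives $\|\Psi(h)-\e^{hC}\|\le\int_0^h\|M(t)-C\|\,dt$, which is the claim.

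There is no real obstacle here. The only points needing care are the ordering of the factors in the product rule, so that the term $M(t)$ appears between the two unitary operators, and the assumption $h\ge 0$. For $h<0$ one would read the integral as $\left|\int_0^h\cdot\right|$.
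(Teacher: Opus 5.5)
Your proposal is correct and is essentially the paper's proof. The paper writes the linear equation satisfied by $W(t)=\Psi(t)-\e^{t(C_1+\cdots+C_r)}$ and solves it by variation of constants, which gives the same representation $\Psi(h)-\e^{hC}=\int_0^h \e^{(h-t)C}\,(M(t)-C)\,\Psi(t)\,dt$ (with $C=C_1+\cdots+C_r$) that you obtain by differentiating $F(t)$, and then concludes by unitarity exactly as you do; your sign in front of the integral is the right one, while the paper's $-R(t)\Psi(t)$ is a harmless sign slip that disappears once norms are taken.
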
  

\begin{proof}
The operator $W(t)=\Psi(t) - \e^{t (C_1+ \cdots + C_r)}$ clearly verifies
\[
  \frac{d }{dt} W(t) = (C_1 + \cdots + C_r) W(t) - R(t) \Psi(t), \qquad W(0) = 0,
\]
where $R(t) = M(t) - (C_1+\cdots + C_r)$. Integrating this equation leads to
\[
  W(h) = - \int_0^h \e^{(h - t)(C_1 + \cdots + C_r)} R(t) \Psi(t) dt,  
\]
so that
\[
  \|W(h)\| \leq \int_0^h \|\e^{(h - t)(C_1 + \cdots + C_r)} \| \, \|R(t)\| \, \|\Psi(t)\| dt = \int_0^h  \|R(t)\|  \, dt, 
\]
since both $\e^{t (C_1+ \cdots + C_r)}$ and $\Psi(t)$ are unitary operators.
\end{proof}  
An explicit expression of \eqref{eq:M(t)} can be obtained  as 
\begin{equation} \label{mt_exp1}
\begin{aligned}
  M(t) & = C_r + \e^{t \, \ad_{C_r}} C_{r-1} +  \e^{t \, \ad_{C_r}} \e^{t \, \ad_{C_{r-1}}} C_{r-2} + \cdots \\
    & \quad + \e^{t \, \ad_{C_r}} \cdots \, \e^{t \, \ad_{C_3}} C_2 + \e^{t \, \ad_{C_r}} \cdots \, \e^{t \, \ad_{C_2}} C_1,
\end{aligned}
\end{equation}    
where $\ad_X Y = [X,Y] = X Y  - Y X$, and
\[
  \e^{t \, \ad_{X}} Y = \e^{t \,  X} \, Y \, \e^{-t \,  X}.
  \]
 Observe that, if $X$ is a skew-adjoint operator, then
 \[
 \left\| 
  \e^{t \, \ad_{X}} Y
  \right\| 
  \leq \|Y\|.
 \]
It is straightforward to check that $S(t) = \e^{t \, \ad_{X}}$ actually satisfies 
\begin{equation*}
\frac{d}{dt} S(t) Y = S(t) \, \ad_{X} Y, \qquad S(0) = Y, 
\end{equation*}
which implies the well-known identity
\begin{equation}
\label{eq:exp_ad_infty}
\e^{t \, \ad_{X}} Y = \sum_{k \ge 0} \frac{t^k }{k!} \ad_X^k Y = \sum_{k \ge 0} \frac{t^k }{k!}
 \underbrace{[X, [X, \ldots, [X}_{k}, Y]]].
\end{equation}
%
 The expression of $M(t)$ given by \eqref{mt_exp1} can be obtained in a more compact way if we introduce 
the operators $Z_{k}$, $k=1,\ldots, r$, defined recursively as
\begin{equation} \label{zetas}
\begin{aligned}
  & Z_1  = C_1 \\
  & Z_k(t)  = C_k + \e^{t \, \ad_{C_k}}  Z_{k-1}(t), \qquad\qquad k=2,\ldots, r.
  \end{aligned}
\end{equation}
Then, clearly, $M(t) = Z_{r}(t)$. 

For each $k\geq 2$, $Z_k(t)$ can be expanded as a series in powers of $t$ as
\begin{equation} \label{zetas_b}
Z_k(t) = \sum_{n=0}^{\infty} t^{n} Z_{k,n}, 
\end{equation}
where
\begin{equation}
\label{eq:Zkn}
\begin{aligned}
Z_{k,0} &= C_1 + \cdots + C_k,\\
 Z_{k,n} &= \sum_{j=0}^n \frac{1}{j!} \,\ad_{C_k}^j Z_{k-1, n-j},\quad n\geq 1,
 \end{aligned}
\end{equation}
or explicitly, 
\begin{equation}
\label{eq:Zkn_explicit}
\begin{aligned}
 & Z_{1,n}=0, \qquad n \ge 1 \\ 
 & Z_{k,n} = \sum_{j=2}^{k} \, \sum_{i_j+\cdots+i_k=n} \frac{\ad_{C_k}^{i_k} \cdots \ad_{C_j}^{i_j}}{i_k! \cdots i_j!} C_{j-1}, \quad k\geq 2, \quad n\geq 0.
\end{aligned} 
\end{equation}
We next aim at expressing $M(t)$ as the $n$th order truncation of that series plus a remainder.
 To this end, we will apply Lemma~\ref{l:lemma2} below, which is a simple consequence of the Taylor expansion of the exponential function
with remainder in integral form. 
\begin{lemma} \label{l:lemma2}
    Let $X, Y$ be two linear operators and $t \in \mathbb R$. Then, for any $n\geq 1$, 
        \[
        \e^{t \, \ad_X} Y = \sum_{k=0}^{n-1} \frac{t^k}{k!} \, \ad_X^{k} Y + t^n G_n\left(t, X, Y\right),
    \]
    where
  \begin{equation}
  \label{eq:Gn}
G_n(t, X,Y) = \int_0^1 \frac{\sigma^{n-1}}{(n-1)!} \, \e^{t \, (1-\sigma)\, \ad_X}\, \ad_X^n Y\, d\sigma.
\end{equation}

\end{lemma}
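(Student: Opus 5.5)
The plan is to treat $F(t) = \e^{t\,\ad_X} Y$ as a smooth operator-valued function of the real variable $t$ and apply Taylor's formula with integral remainder to it. By \eqref{eq:exp_ad_infty}, or directly from the differential equation satisfied by $S(t)=\e^{t\,\ad_X}$, we have $F'(t) = \e^{t\,\ad_X}\ad_X Y$. By induction, $F^{(k)}(t) = \e^{t\,\ad_X}\ad_X^k Y$ for every $k\ge 0$, and hence $F^{(k)}(0) = \ad_X^k Y$.

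For the bounded operators considered here, $\e^{t\,\ad_X}Y = \e^{tX}Y\e^{-tX}$ is infinitely differentiable in operator norm. The derivative formula follows from differentiating the product, which gives $X\e^{tX}Y\e^{-tX} - \e^{tX}Y\e^{-tX}X$. This equals $\e^{tX}[X,Y]\e^{-tX}$ because $X$ commutes with $\e^{\pm tX}$. The standard Taylor formula with integral remainder (valid for Banach-space-valued $C^n$ functions) then gives
\[
F(t) = \sum_{k=0}^{n-1}\frac{t^k}{k!}F^{(k)}(0) + \int_0^t \frac{(t-\tau)^{n-1}}{(n-1)!}F^{(n)}(\tau)\,d\tau .
\]

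Next, substitute $\tau = t(1-\sigma)$, so that $t-\tau = t\sigma$ and $d\tau = -t\,d\sigma$, with $\sigma$ running from $1$ down to $0$. The remainder becomes
\[
t^n\int_0^1 \frac{\sigma^{n-1}}{(n-1)!}\,\e^{t(1-\sigma)\ad_X}\ad_X^nY\,d\sigma = t^n G_n(t,X,Y),
\]
which is exactly \eqref{eq:Gn}. The argument also works for negative $t$, since the substitution handles the sign automatically.

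The only delicate point is justifying the differentiability of $t\mapsto \e^{t\,\ad_X}Y$ if $X$ or $Y$ are not bounded. Since the lemma is stated for the bounded or matrix setting used throughout (the $C_j$), I would simply note this and remark that the identity remains valid on suitable domains in the unbounded case. There is no real obstacle beyond the change of variables.
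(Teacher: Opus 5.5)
Your proof is correct and follows the route the paper indicates. The paper only remarks that the lemma is a simple consequence of Taylor's formula with integral remainder for the exponential; your computation $F^{(k)}(t)=\e^{t\,\ad_X}\ad_X^kY$, followed by the substitution $\tau=t(1-\sigma)$ in the remainder, supplies exactly those details (valid for all real $t$ in the bounded setting).
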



We can express \eqref{zetas_b} for any given $q \ge 1$ as
\begin{equation}
\label{eq:Zk_with_remainder}
Z_k(t) = \sum_{n=0}^{q-1} t^{n} Z_{k,n} + t^q R_{k,q}(t), \qquad k\geq 2,
\end{equation}
where the expression of $R_{k,q}(t)$ can be obtained from \eqref{zetas} as follows:
\[
\begin{aligned}
Z_k(t) &= C_k + \e^{t \, \ad_{C_k}}  Z_{k-1}(t) = C_k + \sum_{n=0}^{q-1} t^{n} \e^{t \, \ad_{C_k}}  Z_{k-1,n} + t^q \e^{t \, \ad_{C_k}} R_{k-1,q}(t) \\
&=  C_k + \sum_{n=0}^{q-1} t^n \sum_{j=0}^{q-1-n} \frac{t^{j}}{j!} \ad_{C_k}^{j} Z_{k-1,n} + t^q \sum_{n=0}^{q-1}  G_{q-n}\left(t, C_k, Z_{k-1,n}\right) +
 t^q \e^{t \, \ad_{C_k}} R_{k-1,q}(t) \\
 & = \sum_{n=0}^{q-1} t^{n} Z_{k,n} + t^q \sum_{n=0}^{q-1}  G_{q-n}\left(t, C_k, Z_{k-1,n}\right) +
 t^q \e^{t \, \ad_{C_k}} R_{k-1,q}(t),\
 \end{aligned}
 \]
with $R_{1,q}(t)=0$. We thus have, for $k\geq 2$,
\begin{equation}
\label{eq:R_{k,q}}
R_{k,q}(t) = \hat R_{k,q}(t) + \e^{t \, \ad_{C_k}} R_{k-1,q}(t),
\end{equation}
where 
\begin{equation*}
\hat R_{k,q}(t) \equiv  \sum_{n=1}^{q}  G_{n}\left(t, C_k, Z_{k-1,q-n}\right).
\end{equation*}
In consequence,
\begin{equation}  \label{eq_M(t)}
M(t) = Z_r(t) = \sum_{j=1}^{r} C_j + \sum_{n=1}^{q-1} t^n Z_{r,n} + t^q \, R_{r,q}(t), \qquad q \ge 1,
\end{equation}
where 
\begin{equation*}
\| R_{r,q}(t) \| \leq \| \hat R_{2,q}(t) \| +  \cdots + \| \hat R_{r,q}(t) \|.
\end{equation*}
Taking into account the definition of $\hat R_{k,q}(t)$ and  \eqref{eq:Gn}, we can write
for $k\geq 2$
\begin{align*}
\hat R_{k,q}(t)
&= 
\int_0^1  \e^{t \, (1-\sigma)\, \ad_{C_k}}\, 
\left(  \sum_{n=1}^{q} \frac{\sigma^{n-1}}{(n-1)!} \, \ad_{C_k}^n Z_{k-1,q-n} 
\right)
d\sigma,
\end{align*}
and hence,
\begin{align*}
\left\| \hat R_{k,q}(t) \right\| &\leq
\int_0^1
\left\|
 \sum_{n=1}^{q} \frac{\sigma^{n-1}}{(n-1)!} \, \ad_{C_k}^n Z_{k-1,q-n} 
\right\|
d\sigma.
\end{align*}

\begin{lemma}
\label{l:M(t)}
Consider \eqref{mt_exp1} and \eqref{eq:Zkn} for arbitrary skew-adjoint matrices $C_1,\ldots,C_r$.  Then 
\begin{equation*}
\left\| M(t) - \sum_{n=0}^{q-1} t^n \, Z_{r,n}  \right\| \leq 
t^q \sum_{k=2}^r 
\int_0^1
\left\|
 \sum_{n=1}^{q} \frac{\sigma^{n-1}}{(n-1)!} \, \ad_{C_k}^n Z_{k-1,q-n} 
\right\|
d\sigma.
\end{equation*}
\end{lemma}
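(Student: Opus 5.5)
The plan is to assemble the estimate directly from the recursive remainder structure already derived above. The starting point is \eqref{eq_M(t)}, which is obtained from \eqref{eq:Zk_with_remainder} at $k=r$ together with $M(t)=Z_r(t)$:
\[
M(t) - \sum_{n=0}^{q-1} t^n Z_{r,n} = t^q R_{r,q}(t).
\]
Recall that $Z_{r,0}=C_1+\cdots+C_r$. So it suffices to bound $\|R_{r,q}(t)\|$ by $\sum_{k=2}^r$ of the stated integrals. First, I will make sure the expansion \eqref{eq:Zk_with_remainder} is valid for every $k\geq 2$, with $R_{1,q}=0$. This follows by induction on $k$ from the computation preceding \eqref{eq:R_{k,q}}. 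That computation applies Lemma~\ref{l:lemma2} to each term $\e^{t\,\ad_{C_k}} Z_{k-1,n}$ with truncation index $q-n$, and then regroups the powers of $t$ using the recursion \eqref{eq:Zkn}. For the base case one uses $Z_1=C_1$, $Z_{1,0}=C_1$, $Z_{1,n}=0$.

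Second, I will unroll the recursion \eqref{eq:R_{k,q}}:
\[
R_{r,q}(t)=\sum_{k=2}^r \e^{t\,\ad_{C_r}}\cdots\e^{t\,\ad_{C_{k+1}}}\hat R_{k,q}(t).
\]
Each operator $\e^{t\,\ad_{C_j}}$ acts as conjugation by the unitary $\e^{tC_j}$, so it is an isometry in operator norm because the $C_j$ are skew-adjoint. The triangle inequality then gives $\|R_{r,q}(t)\|\le\sum_{k=2}^r\|\hat R_{k,q}(t)\|$.

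Third, I will bound each $\hat R_{k,q}(t)$. By the definition of $\hat R_{k,q}$ and \eqref{eq:Gn}, reindex $n\mapsto q-n$ and interchange the finite sum with the integral over $\sigma$. This yields the single integral
\[
\hat R_{k,q}(t)=\int_0^1 \e^{t(1-\sigma)\ad_{C_k}}\Bigl(\sum_{n=1}^q \frac{\sigma^{n-1}}{(n-1)!}\ad_{C_k}^n Z_{k-1,q-n}\Bigr)d\sigma .
\]
Taking norms inside the integral and again using that $\e^{t(1-\sigma)\ad_{C_k}}$ is norm-preserving gives the integrand in the statement. The key point is that the sum over $n$ is kept inside the norm, which is what allows cancellations between the terms.

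The only delicate step is the bookkeeping in the inductive verification of \eqref{eq:Zk_with_remainder}. One has to check that the powers $t^{n+j}$ with $n+j\le q-1$ reassemble exactly into $\sum_n t^n Z_{k,n}$ through \eqref{eq:Zkn}, with the $C_k$ term absorbed into $Z_{k,0}$. It must also be checked that all leftover terms are precisely $t^q\sum_{n=0}^{q-1}G_{q-n}(t,C_k,Z_{k-1,n})$. Once this identity is confirmed, the lemma follows by combining the bounds on $\|R_{r,q}(t)\|$ and on each $\|\hat R_{k,q}(t)\|$.
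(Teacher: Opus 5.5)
Your proposal is correct and follows essentially the same route as the paper. You write $M(t)-\sum_{n=0}^{q-1}t^nZ_{r,n}=t^qR_{r,q}(t)$ via \eqref{eq_M(t)}, unroll $R_{k,q}(t)=\hat R_{k,q}(t)+\e^{t\,\ad_{C_k}}R_{k-1,q}(t)$ using that each $\e^{t\,\ad_{C_j}}$ is conjugation by a unitary (hence an isometry) to get $\|R_{r,q}(t)\|\le\sum_{k=2}^r\|\hat R_{k,q}(t)\|$, and then pull $\e^{t(1-\sigma)\ad_{C_k}}$ out of the sum defining $\hat R_{k,q}(t)$ before taking norms inside the $\sigma$-integral, exactly as the paper does; your inductive check of \eqref{eq:Zk_with_remainder} (truncating $\e^{t\,\ad_{C_k}}Z_{k-1,n}$ at index $q-n\ge 1$ and regrouping through \eqref{eq:Zkn}) merely spells out the computation the paper displays just before the lemma.
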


This, together with Lemma~\ref{lemma_estimate} leads to the following result:

\begin{proposition} \label{p:basic_comm}
Given arbitrary skew-adjoint matrices $C_1,\ldots,C_r$,
\[
\| \e^{h C_r}\cdots \e^{h C_1} - \e^{h (C_1 + \cdots + C_r)} \| \leq \sum_{n=1}^{q-1}  \frac{h^{n+1}}{n+1} \| Z_{r,n} \| + h^{q+1} \, \mathcal{R}_q(h),
\]
where 
\begin{equation}
\label{eq:R_q}
\begin{aligned}
\mathcal{R}_q(h) &= \frac{1}{q+1}
 \sum_{k=2}^r \int_0^1
\left\|
 \sum_{n=1}^{q} \frac{\sigma^{n-1}}{(n-1)!} \, \ad_{C_k}^n Z_{k-1,q-n} 
\right\|
d\sigma,
\end{aligned}
\end{equation}
and the skew-adjoint operators $Z_{k,n}$ are given by the recursion \eqref{eq:Zkn}.
\end{proposition}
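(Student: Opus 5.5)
The plan is to combine Lemma~\ref{lemma_estimate} with Lemma~\ref{l:M(t)}. By Lemma~\ref{lemma_estimate},
\[
\| \e^{h C_r}\cdots \e^{h C_1} - \e^{h (C_1 + \cdots + C_r)} \| \leq \int_0^h \| M(t) - (C_1+\cdots+C_r)\|\, dt .
\]
The first step is to decompose the integrand using the expansion \eqref{eq_M(t)}. Since $Z_{r,0} = C_1+\cdots+C_r$ by \eqref{eq:Zkn}, we have
\[
M(t) - (C_1+\cdots+C_r) = \sum_{n=1}^{q-1} t^n Z_{r,n} + \Bigl( M(t) - \sum_{n=0}^{q-1} t^n Z_{r,n} \Bigr).
\]
The triangle inequality then gives, for $t\in[0,h]$,
\[
\| M(t) - (C_1+\cdots+C_r)\| \le \sum_{n=1}^{q-1} t^n \|Z_{r,n}\| + \Bigl\| M(t) - \sum_{n=0}^{q-1} t^n Z_{r,n} \Bigr\|.
\]

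The second step is to bound the last term by Lemma~\ref{l:M(t)}. That lemma gives $t^q K$, where
\[
K = \sum_{k=2}^r \int_0^1 \Bigl\| \sum_{n=1}^{q} \frac{\sigma^{n-1}}{(n-1)!} \ad_{C_k}^n Z_{k-1,q-n}\Bigr\| d\sigma .
\]
This constant does not depend on $t$. The reason is that the $t$-dependent factor $\e^{t(1-\sigma)\ad_{C_k}}$ in $\hat R_{k,q}(t)$ was already removed using $\|\e^{t\,\ad_X}Y\|\le\|Y\|$ for skew-adjoint $X$. The recursion \eqref{eq:R_{k,q}} together with this same isometry property gives $\|R_{r,q}(t)\|\le\sum_{k}\|\hat R_{k,q}(t)\|$, and this is what Lemma~\ref{l:M(t)} encodes.

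Integrating term by term over $[0,h]$ uses $\int_0^h t^n\,dt = h^{n+1}/(n+1)$. This produces $\sum_{n=1}^{q-1} \frac{h^{n+1}}{n+1}\|Z_{r,n}\|$ plus $\frac{h^{q+1}}{q+1}K = h^{q+1}\mathcal{R}_q(h)$, which is exactly \eqref{eq:R_q}.

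The only points needing care are the following.
\begin{itemize}
\item The remainder bound in Lemma~\ref{l:M(t)} must hold uniformly in $t\in[0,h]$, so that it can be integrated.
\item The operators $Z_{k,n}$ must be skew-adjoint, since commutators of skew-adjoint operators are skew-adjoint. This is used only for the isometry property of $\e^{t\,\ad_{C_k}}$.
\item When $q=1$ the finite sum is empty, and the argument goes through unchanged.
\end{itemize}
I expect no genuine obstacle, since the substantive work (Lemmas~\ref{lemma_estimate} and \ref{l:M(t)}) is already available.
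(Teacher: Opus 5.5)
Your proposal is correct and follows exactly the route the paper intends. The paper proves this by combining Lemma~\ref{lemma_estimate} with the decomposition $M(t)-Z_{r,0}=\sum_{n=1}^{q-1}t^nZ_{r,n}+t^qR_{r,q}(t)$ from \eqref{eq_M(t)}, applying the $t$-uniform remainder bound of Lemma~\ref{l:M(t)}, and integrating term by term over $[0,h]$ with $h\ge 0$. One minor remark: the isometry $\|\e^{t\,\ad_{C_k}}Y\|=\|Y\|$ only needs $C_k$ to be skew-adjoint, so that $\e^{tC_k}$ is unitary, not $Y=Z_{k-1,n}$; your second bullet is therefore true but not actually needed.
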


At this point it is illustrative to specify these bounds for the most elementary splitting methods.

\paragraph{(a) Lie--Trotter.} We consider $\Psi(h) = \e^{h A_1} \cdots \e^{h A_N}$ for skew-adjoint operators $A_1,\ldots,A_N$. Application of Proposition~\ref{p:basic_comm} with $r=N$, $C_j = A_j$, $q=1$, directly gives
\[
  \| \e^{h A_1} \cdots \e^{h A_N} - \e^{h (A_1 +\cdots + A_N)} \| \leq  \frac{h^2}{2} \, \sum_{k=2}^{N}
 \left\|  \sum_{j=1}^{k-1}   [A_k,  A_j] \, \right\|.
   \] 
   
   \paragraph{(b) Strang splitting with two operators.}
We now consider the Strang splitting with $N=2$, that is $\Psi(t) = \e^{\frac{t}{2} A_1} \, \e^{t A_2} \, \e^{\frac{t}{2} A_1}$.
Application of Proposition~\ref{p:basic_comm} with $q=2$, $r=3$, $C_1=C_3= \frac12 A_1$, $C_2=A_2$, $Z_{2,2}=0$ (since it is of second order) gives
\begin{equation*} 
  \|\e^{h (A_1 + A_2)} - \e^{\frac{h}{2} A_1} \, \e^{h A_2} \, \e^{\frac{h}{2} A_1}\|  \leq  h^3 \mathcal{R}_2(h),
\end{equation*}  
where
\[
\begin{aligned}
\mathcal{R}_2(h) &= \frac13 \int_0^1 \sigma  \, \|\ad_{C_2}^2 C_1\| \, d\sigma + \frac{1}{3} \int_0^1
\left\| \ad_{C_3} \ad_{C_2} C_1 + 
\sigma \, \ad_{C_3}^2 (C_1 + C_2) \right\|
d\sigma \\
&= \frac{1}{6} \, \|\ad_{C_2}^2 C_1\| +  \frac{1}{12} \int_0^1 \left\| (1-\sigma) \,  [A_1, [A_2,A_1]] \right\| d\sigma \\
&= \frac{1}{12} \, \|[A_2,[A_2, A_1]\| +  \frac{1}{24} 
\left\|
 [A_1, [A_1,A_2]]
\right\|.
\end{aligned}
\]
We thus have
\begin{equation} \label{b_strang}
\begin{aligned}
 & \|\e^{h (A_1 + A_2)} - \e^{\frac{h}{2} A_1} \, \e^{h A_2} \, \e^{\frac{h}{2} A_1}\|  \leq \frac{h^3}{12} \|[A_2,[A_1,A_2]]\| + \frac{h^3}{24} \|[A_1,[A_1,A_2]]\|,
\end{aligned} 
\end{equation}  
i.e, we get the optimal bound, reproducing previous results \cite{suzuki85dfo,childs21tot,iserles24aea}. The bound \eqref{b_strang} is
optimal in the sense that, in virtue of the symmetric Baker--Campbell--Hausdorff formula \cite{blanes25aci}, one has 
\begin{equation*}
\lim_{h\to 0} h^{-2}\left(\e^{h (A_1 + A_2)} - \e^{\frac{h}{2} A_1} \, \e^{h A_2} \, \e^{\frac{h}{2} A_1}\right) = \frac{1}{12} [A_2,[A_1,A_2]] + \frac{1}{24} [A_1,[A_1,A_2]],
\end{equation*}
so that the coefficient $\frac{1}{12}$ or $\frac{1}{24}$ cannot be replaced by smaller ones in \eqref{b_strang}. Indeed, this would lead to a contradiction for operators $A_1$ and $A_2$  such that $[A_1,[A_1,A_2]]=0$ or $[A_1,[A_1,A_2]]=0$ respectively.

\begin{remark} \label{r:l:M(t)}
The estimate for the remainder of the $(q-1)$th order truncated expansion of $M(t)$ given in Lemma~\ref{l:M(t)} above can be simplified by applying the triangular inequality to the integrand in the estimate as follows:
\begin{equation*}
\left\| M(t) - \sum_{n=0}^{q-1} t^n \, Z_{r,n}  \right\| \leq 
t^q \sum_{k=2}^r \sum_{n=1}^{q} 
 \left\| \frac{1}{n!} \,
  \ad_{C_k}^n Z_{k-1,q-n} 
\right\|.
\end{equation*}
However, using that simplified upper bound of the remainder of $M(t)$ would lead to the following suboptimal estimate of the local error of Strang splitting 
\begin{equation} \label{b_strang_so}
\begin{aligned}
 & \|\e^{h (A_1 + A_2)} - \e^{\frac{h}{2} A_1} \, \e^{h A_2} \, \e^{\frac{h}{2} A_1}\|  \leq \frac{h^3}{12} \|[A_2,[A_1,A_2]]\| + \frac{h^3}{8} \|[A_1,[A_1,A_2]]\|,
\end{aligned} 
\end{equation}  
instead of the optimal one \eqref{b_strang}.
\end{remark}

\paragraph{(c) Strang splitting with an arbitrary number of operators.}
Although Proposition \ref{p:basic_comm} provides the optimal bound for the local error committed by the Strang splitting with two operators, this is not generally the
case when $N>2$. A better estimate is obtained by following the same approach as in subsection \ref{subsec_2.3}, i.e., by considering the symmetric
factorization \eqref{eq:sym_factorization},
with $\Phi(h)=\e^{h A_N} \cdots \e^{h A_1}$ (or any other permutation of the factors) and $\Phi^*(h)=\Phi(-h)^{-1}$. To proceed, we need the following
preliminary result.

\begin{lemma}
\label{l:Theta_M(t)}
Let $\Theta(t)$ be a one-parameter family of unitary operators  and consider the skew-adjoint operator 
\begin{equation}
\label{eq:l:Theta_M(t)}
M(t) = 
\left(
\frac{d}{dt} \Theta(t)
\right)
\Theta(t)^{-1},
\end{equation}
so that $\Theta(t)$ is the solution operator of 
\begin{equation*}
\frac{d}{dt} \Theta(t) = M(t) \Theta(t), \qquad  \Theta(0)=I.
\end{equation*}
Then, for all $h \in \mathbb{R}$,
\begin{equation*}
\| \Theta(h) - \Theta(-h)\| \leq \int_0^h \| M(t) + M(-t)\|\, dt.
\end{equation*}
\end{lemma}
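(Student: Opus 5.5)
The plan is to mimic the proof of Lemma~\ref{lemma_estimate}, but now comparing $\Theta(t)$ with $\Theta(-t)$ instead of with an exact exponential. Define $\tilde\Theta(t) = \Theta(-t)$. Then
\[
\frac{d}{dt}\tilde\Theta(t) = -M(-t)\,\tilde\Theta(t), \qquad \tilde\Theta(0)=I .
\]
So $\Theta$ and $\tilde\Theta$ solve linear equations whose coefficients are $M(t)$ and $-M(-t)$; these differ by exactly $M(t)+M(-t)$. Both coefficients are skew-adjoint, because $\Theta(t)$ is unitary: differentiating $\Theta\Theta^*=I$ gives $M+M^*=0$.

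Next, set $W(t)=\Theta(t)-\Theta(-t)$, so $W(0)=0$. It satisfies
\[
\frac{d}{dt}W(t) = M(t)\,W(t) + \big(M(t)+M(-t)\big)\Theta(-t).
\]
Since the propagator of $M(t)$ is $\Theta(t)\Theta(\tau)^{-1}$, variation of constants gives
\[
W(h) = \int_0^h \Theta(h)\Theta(t)^{-1}\big(M(t)+M(-t)\big)\Theta(-t)\,dt .
\]
An equivalent route that avoids propagators is to differentiate $\Theta(t)^{-1}\Theta(-t)$. Its derivative is $-\Theta(t)^{-1}\big(M(t)+M(-t)\big)\Theta(-t)$, and integrating from $0$ to $h$ yields $\Theta(h)^{-1}\Theta(-h)-I$. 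Multiplying on the left by $\Theta(h)$ then gives $-W(h)$.

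Finally, take norms. All of $\Theta(h)$, $\Theta(t)^{-1}$ and $\Theta(-t)$ are unitary, so
\[
\|W(h)\| \le \int_0^h \|M(t)+M(-t)\|\,dt .
\]
For $h<0$ the same argument applies with the integral read as $\bigl|\int_0^h\cdot\bigr|$; alternatively, note that $W(-h)=-W(h)$, so it suffices to treat $h\ge0$. There is no real obstacle here. The only care needed is getting the sign and ordering right in the derivative of $\Theta(-t)$, and assuming enough regularity (differentiability of $\Theta$) for $M$ to be defined and integrable.
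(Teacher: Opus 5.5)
Your proof is correct and follows essentially the same route as the paper. You use the same $W(t)=\Theta(t)-\Theta(-t)$ and the same inhomogeneous equation $W'=M(t)W+(M(t)+M(-t))\Theta(-t)$, and you apply variation of constants with the unitary fundamental solution, which you correctly identify as $\Theta$ itself. Your remark that for $h<0$ the right-hand side must be read as $\bigl|\int_0^h \|M(t)+M(-t)\|\,dt\bigr|$ (or reduced to $h\ge 0$ via $W(-h)=-W(h)$) is a fair clarification that the paper leaves implicit.
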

\begin{proof}
The proof is very similar indeed to the one of Lemma \ref{lemma_estimate}. In this case we take $W(t) = \Theta(t) - \Theta(-t)$, so that 
\[
  \frac{d }{dt} W(t) = M(t) W(t) + B(t), \qquad W(0) = 0,
\]
with $B(t) \equiv (M(t) + M(-t)) \, \Theta(-t)$. In consequence \cite{coddington55tod},
\[
    W(t) = \Xi(t) \int_0^t \Xi^{-1}(s) B(s) \, ds,
\]
where $\Xi(t)$ is the fundamental matrix of the system $\frac{d }{dt} W(t) = M(t) W(t)$. Since $\| \Xi(t)\| = 1$, then
\[
  \|W(t)\| \le \int_0^t \|B(s)\| ds \le \int_0^t \| M(s) + M(-s)\| \, ds,
\]  
and the result follows.
\end{proof}

Equipped with this lemma, we can now deduce a refined error estimate for the 2nd-order scheme
\[
    S_2(h) = \e^{\frac{1}{2} h A_1} \,  \e^{\frac{1}{2} h A_2} \, \cdots \,  \e^{h A_N} \, \cdots \e^{\frac{1}{2} h A_2} \, \e^{\frac{1}{2} h A_1}.
\]
Notice that $S_2(h)$ admits the factorization \eqref{eq:sym_factorization}, $S_2(h) = \Phi^*(h) \Phi(h)$, with 
$\Phi(h)=\e^{\frac{h}{2} A_N} \cdots \e^{\frac{h}{2} A_1}$. Let us introduce the operator
\begin{equation} \label{eq_Theta}
   \Theta(t)=\Phi(t) \e^{-\frac{t}{2} (A_1+\cdots+A_N)} = \e^{\frac{t}{2} A_N} \cdots \e^{\frac{t}{2} A_1} \, \e^{-\frac{t}{2} (A_1+\cdots+A_N)}.
\end{equation}
A straightforward  calculations shows that
\[
  S_2(h) - \e^{h (A_1 + \cdots A_N)} = \Phi^*(h) \big( \Theta(h) - \Theta(-h) \big) \e^{\frac{h}{2} (A_1 + \cdots A_N)}
\]
and application of Lemma \ref{l:Theta_M(t)} leads to
\begin{equation} \label{es_S_1}
    \|S_2(h) - \e^{h (A_1 + \cdots + A_N)}\| \le \|\Theta(h) - \Theta(-h) \| \le \int_0^h \|M(t) + M(-t)\| \, dt,
\end{equation}
where $M(t)$ is given by \eqref{eq:l:Theta_M(t)} and $\Theta(t)$ by \eqref{eq_Theta}. In consequence, 
by taking $r=N+1$, $q=2$,  $C_1 = -\frac12 (A_1+\cdots+A_N)$, $C_{j+1} = \frac{1}{2}A_j$, $j=1,\ldots,N$ in
expression \eqref{eq_M(t)}, we have
\[
  M(t) = t Z_{r,1} + t^2 R_{r,2}(t),
\]
and Lemma \ref{l:Theta_M(t)} gives
\[
   \|M(t) + M(-t) \| \le 2 \, t^2 \sum_{k=2}^{N+1} \int_0^1 \| \ad_{C_k} Z_{k-1,1} + \sigma \, \ad_{C_k}^2 Z_{k-1,0} \| \, d\sigma.
\]
Since $Z_{k-1,0} =  -\frac{1}{2} (A_{k-1} + \cdots + A_N)$, we have
\[
\begin{aligned}
 & \ad_{C_k}^2 Z_{k-1,0} = -\frac{1}{8} \ad_{A_{k-1}}^2 (A_k + \cdots + A_N), \\
 & \ad_{C_k}  Z_{k-1,1} = \ad_{C_k} \sum_{j=1}^{k-2} \ad_{C_{j+1}} Z_{j,0} = -\frac{1}{8} \sum_{j=1}^{k-2} \ad_{A_{k-1}} \ad_{A_j} (A_{j+1} + \cdots + A_N)
\end{aligned} 
\]
and \eqref{es_S_1} leads finally to
\[
\begin{aligned}
   \|S_2(h) - \e^{h (A_1 + \cdots + A_N)}\| & \le  \frac{h^3}{12} \sum_{k=2}^{N+1} \Big( \big\| [A_{k-1}, \sum_{j=1}^{k-2} [A_j, A_{j+1} + \cdots + A_N]] \big\|   \\
   & \qquad\quad +
   \frac{1}{2} \big\| [A_{k-1}, [A_{k-1}, A_k + \cdots + A_N]] \big\| \Big).
\end{aligned}   
\]   
This expression reproduces the previous estimate \eqref{b_strang} when $N=2$.

\subsection{Error bounds for splitting methods with two operators}

\subsubsection{Generic estimates}

The case when the problem \eqref{basic_eq} can be separated into only two parts, i.e.,
\begin{equation} \label{equ_2op}
 \frac{du}{dt} = (A + B) u, \qquad u(0) = u_0,
\end{equation}
 is very common in applications. Here and in the sequel the two skew-adjoint operators involved are denoted as $A$ and $B$ for simplicity, and the goal
 is to get specific error estimates for time-symmetric splitting methods of the form
\begin{equation} \label{2oper_sym1}
  \Psi(h) = \e^{h a_1 A} \, \e^{h b_1 B} \, \cdots \, \e^{h a_s  A} \, \e^{h b_s B} \, \e^{h a_s A} \, \cdots \, \e^{h b_1 B} \, \e^{h a_1 A},
\end{equation}
in terms of a basis of the free Lie algebra generated by the two operators $A$ and $B$. Notice that since it is possible to set $b_s = 0$, this format includes 
schemes where the central exponential involves the operator $A$  instead of $B$. Likewise, integrators whose first (and last) term corresponds to the exponential of $B$  can be
reproduced by \eqref{2oper_sym1} by setting to zero the corresponding coefficients.

Proceeding in an analogous was as for the Strang splitting, we introduce the operators
\[
  \Phi(h) = \e^{\frac{h}{2}  b_s B} \, \e^{h a_s A} \, \cdots \, \e^{h b_1 B} \, \e^{h a_1 A},
\]
so that  $\Psi(h) = \Phi^*(h) \Phi(h)$, and
\begin{equation} \label{2oper_sym2}
  \Theta(t) = \Phi(t) \, \e^{-\frac{t}{2} (A+B)}.
\end{equation}
As before,
\[
  \Psi(h) - \e^{h(A+B)} = \Phi^*(h) \big( \Theta(h) - \Theta(-h) \big) \e^{\frac{h}{2} (A+B)}, 
\]
and therefore
\begin{equation} \label{2oper_sym3}
   \| \Psi(h) - \e^{h(A+B)} \| \le \|\Theta(h) - \Theta(-h) \| \le \int_0^h \|M(t) + M(-t)\| \, dt.
\end{equation}   
The expression of $M(t)$ can be obtained from \eqref{eq:l:Theta_M(t)} with   
\begin{equation} \label{theta_2}
 \Theta(t) = \e^{ \frac{t}{2} b_s B} \, \e^{t a_s A} \, \cdots \, \e^{t b_1  B} \, \e^{t a_1 A} \, \e^{-\frac{t}{2} (A+B)}.
\end{equation}
In other words, $M(t)$ is given by \eqref{eq_M(t)} with $r=2s+1$,
\begin{equation} \label{B_kop}
\begin{aligned}
  & C_1 = -\frac{1}{2} (A+B), \qquad C_{2s} = a_s A, \qquad C_{2s+1} = \frac{1}{2} b_s B \\
  & C_{2j} = a_j A, \qquad C_{2j+1} = b_j B, \qquad j = 1, \ldots, s-1 
\end{aligned}  
\end{equation}
Let us assume now that the symmetric scheme \eqref{2oper_sym1} is of (even) order $2p$. Taking into account \eqref{order_p} and \eqref{2oper_sym2}, then
$M(t) + M(-t) = \mathcal{O}(t^{2p})$. In consequence, application of Lemma~\ref{l:M(t)} leads to 
\[
\begin{aligned}
  \|M(t) + M(-t)\| & \le t^{2p} \, \|R_{2s+1,2p}(t) + R_{2s+1,2p}(-t) \| \\
   & \le 2 \, t^{2p} \, \sum_{k=2}^{2s+1} \int_0^1 \left\|
 \sum_{n=1}^{2p} \frac{\sigma^{n-1}}{(n-1)!} \, \ad_{C_k}^n Z_{k-1,2p-n} 
\right\|
d\sigma,
\end{aligned}
\]
and we have the following error estimate:
\begin{proposition} \label{p:2operators}
Given the time-symmetric splitting method \eqref{2oper_sym1} of order $2p$, with $A$, $B$ skew-adjoint matrices, then
\[
\| \Psi(h) - \e^{h(A+B)} \| \leq \frac{2}{2p+1} h^{2p+1} \, \sum_{k=2}^{2s+1} \int_0^1
\left\|
 \sum_{n=1}^{2p} \frac{\sigma^{n-1}}{(n-1)!} \, \ad_{C_k}^n Z_{k-1,2p-n} 
\right\|
d\sigma,
\]
where the skew-adjoint operators $Z_{k,n}$ and $C_k$ are given by \eqref{eq:Zkn} and \eqref{B_kop}, respectively.
\end{proposition}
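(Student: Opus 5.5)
The plan is to chain the three ingredients already assembled: the symmetric factorization bound \eqref{2oper_sym3}, the truncated expansion of $M(t)$ from \eqref{eq_M(t)}, and the remainder estimate of Lemma~\ref{l:M(t)}.

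\textbf{Step 1 (reduction to $\Theta$).} With $\Phi(h)$ and $\Theta(t)$ as in \eqref{2oper_sym2}, the palindromic structure of \eqref{2oper_sym1} gives $\Psi(h)=\Phi^*(h)\Phi(h)$. Lemma~\ref{l:Theta} and Lemma~\ref{l:Theta_M(t)} then yield
\[
\| \Psi(h) - \e^{h(A+B)} \| \le \int_0^h \|M(t)+M(-t)\|\,dt,
\]
where $M(t)=\big(\tfrac{d}{dt}\Theta(t)\big)\Theta(t)^{-1}$. Since \eqref{theta_2} is a product $\e^{tC_{2s+1}}\cdots\e^{tC_1}$ with the $C_k$ of \eqref{B_kop}, this $M(t)$ coincides with \eqref{eq:M(t)}. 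Therefore \eqref{eq_M(t)} applies with $r=2s+1$ and $q=2p$:
\[
M(t)=\sum_{j} C_j+\sum_{n=1}^{2p-1}t^n Z_{r,n}+t^{2p}R_{r,2p}(t).
\]

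\textbf{Step 2 (killing the polynomial part).} First, $\sum_j C_j = -\tfrac12(A+B)+\sum_j a_jA+\tfrac12\sum_j b_jB$ (counting the halved central $b_s$). This vanishes by consistency of \eqref{2oper_sym1}, which requires $2\sum a_j=1$ and $2\sum b_j$ with $b_s$ counted once equal to $1$. Next, order $2p$ means $\Theta(h)-\Theta(-h)=\mathcal{O}(h^{2p+1})$ by \eqref{order_p}. I need to translate this into $M(t)+M(-t)=\mathcal{O}(t^{2p})$, i.e. $Z_{r,n}=0$ for even $n<2p$; odd $n$ cancel automatically in $M(t)+M(-t)$.

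This translation is the delicate point. I would argue as follows. Set $\Theta(-t)=\Theta(t)E(t)$, so that $E(t)=I+\mathcal{O}(t^{2p+1})$. Differentiating,
\[
-\dot\Theta(-t)=\dot\Theta(t)E(t)+\Theta(t)\dot E(t).
\]
Multiplying on the right by $\Theta(-t)^{-1}=E(t)^{-1}\Theta(t)^{-1}$ gives
\[
-M(-t)=M(t)+\Theta(t)\dot E(t)E(t)^{-1}\Theta(t)^{-1}.
\]
Hence $M(t)+M(-t)=\mathcal{O}(t^{2p})$. Comparing power series coefficients, all coefficients $t^n$ with $n<2p$ in $M(t)+M(-t)$ vanish. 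Consequently
\[
M(t)+M(-t)=t^{2p}\big(R_{r,2p}(t)+R_{r,2p}(-t)\big),
\]
with the convention that $(-t)^{2p}=t^{2p}$.

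\textbf{Step 3 (bounding and integrating).} Lemma~\ref{l:M(t)} bounds $\|R_{r,2p}(\pm t)\|$ by $\sum_{k=2}^{2s+1}\int_0^1\big\|\sum_{n=1}^{2p}\frac{\sigma^{n-1}}{(n-1)!}\ad_{C_k}^nZ_{k-1,2p-n}\big\|\,d\sigma$. This bound is independent of $t$ and of its sign, because $\e^{\pm t(1-\sigma)\ad_{C_k}}$ is an isometry and $Z_{k-1,n}$ does not depend on $t$. The triangle inequality therefore gives
\[
\|M(t)+M(-t)\|\le 2t^{2p}\,(\text{that sum}).
\]
Integrating from $0$ to $h$ produces the factor $h^{2p+1}/(2p+1)$ and hence the stated estimate. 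The main obstacle is Step 2, namely justifying rigorously that the symmetry plus order condition forces the low even-order $Z_{r,n}$ to vanish. The remaining steps are direct substitutions into results already proved.
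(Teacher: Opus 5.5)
Your proposal is correct and follows the paper's own argument: the symmetric factorization $\Psi(h)=\Phi^*(h)\Phi(h)$ with Lemma~\ref{l:Theta_M(t)}, then the expansion \eqref{eq_M(t)} with $r=2s+1$ and $q=2p$, then Lemma~\ref{l:M(t)} applied at $\pm t$, and finally integrating $2t^{2p}$ over $[0,h]$. Two steps the paper only asserts are filled in correctly by you: the derivation of $M(t)+M(-t)=\mathcal{O}(t^{2p})$ from \eqref{order_p} via $\Theta(-t)=\Theta(t)E(t)$ with $E(t)=I+\mathcal{O}(t^{2p+1})$, and the observation that the remainder bound does not depend on the sign of $t$.
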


\begin{remark}
  Whereas this result has been obtained from Lemma~\ref{l:M(t)} with $q=2p$, the order of the method, it is clear that one can get 
  more refined estimates by taking larger values of $q$, bounding appropriately the quantities $Z_{2s+1,n}$, with
  $n=2p+1, 2p+3, \ldots, q$ and finally bounding the corresponding remainder.
\end{remark}

\subsubsection{Explicit expressions for the error bounds}

We can get more insight into the error bound  by evaluating explicitly the quantity
\begin{equation} \label{integ_1}
  \mathcal{I}_{k,2p}(\sigma) \equiv  \sum_{n=1}^{2p} \frac{\sigma^{n-1}}{(n-1)!}\,
      \ad_{C_k}^{n}\, Z_{k-1,2p-n},
\end{equation}
entering the estimate provided by Proposition \ref{p:2operators}.
  To this end, we consider two sequences of integers $\{\ell(i)\}$ and $\{r(i)\}$ such that 
$\ell(i) < i$ and $r(i) < i$, and define recursively the sequence of commutators $\{E_1, E_2, E_3, \ldots\}$ formed with the operators
$A$ and $B$ as follows:
\[
E_1 = A, \qquad E_2 = B, \qquad 
E_i = [E_{\ell(i)}, E_{r(i)}], \quad i \ge 3.
\]
In addition, we consider the sequence of integers $\{\deg(i)\}$ defined recursively by
\[
   \deg(1) = 1,\qquad \deg(2) = 1,\qquad 
   \deg(i) = \deg(\ell(i)) + \deg(r(i)), \quad i\ge 3.
\]
Thus $\deg(i)$, the \emph{degree} of element $E_i$, counts the total number of operators $A$ and $B$ appearing in the
nested commutator defining $E_i$.  

Every element $F$ obtained from $A$ and $B$ through repeated commutation 
and linear combinations can then be written in the form
\[
   F = \sum_i \mu_i E_i,
\]
for a finite number of nonzero real coefficients $\mu_i$.  
Moreover, in that case 
\begin{equation}\label{eq:adAadB}
   \ad_A F = [A,F]= \sum_i \alpha_i E_i,
   \qquad
   \ad_B F = [B,F] = \sum_i \beta_i E_i,
\end{equation}
where each $\alpha_i$ and $\beta_i$ depends linearly on the coefficients 
$\mu_j$ with $j < i$ due to the triangular structure imposed by the recursion.

Table~\ref{tb:basis} lists the first values of the integer sequences $\ell(i)$ and $r(i)$ determining the elements $E_i$ of degree $\deg(i)\le 7$ for one possible choice of a basis satisfying the previously stated requirements. For this particular choice, the basis elements are ordered so that $\deg(i)<\deg(j)$ implies $i<j$. The table also includes the explicit expressions of the basis elements $E_i$ together with the corresponding coefficients $\alpha_i$ and $\beta_i$ in~\eqref{eq:adAadB}. Indices printed in bold identify those elements that remain nonzero whenever $[[[A,B],B],B]=0$; their significance will be clarified below and further discussed in the following subsection.

\begin{table}[ht]
\centering
\small
\renewcommand{\arraystretch}{1.1}
\begin{tabular}{cccclcc}
\hline
$i$ & $\deg$ & $\ell(i)$ & $r(i)$ & $E_i$ & $\alpha_i$ & $\beta_i$ \\
\hline\hline
$\mathbf{1}$ & 1 & 1 & 0 & $A$ & $0$ & $0$ \\
$\mathbf{2}$ & 1 & 2 & 0 & $B$ & $0$ & $0$ \\
\hline
$\mathbf{3}$ & 2 & 1 & 2 & $[A,B]$ & $\mu_{2}$ & $- \mu_{1}$ \\
\hline
$\mathbf{4}$ & 3 & 3 & 2 & $[[A,B],B]$ & $0$ & $- \mu_{3}$ \\
$\mathbf{5}$ & 3 & 1 & 3 & $[A,[A,B]]$ & $\mu_{3}$ & $0$ \\
\hline
$\mathbf{6}$ & 4 & 5 & 1 & $[[A,[A,B]],A]$ & $- \mu_{5}$ & $0$ \\
$7$ & 4 & 4 & 2 & $[[[A,B],B],B]$ & $0$ & $- \mu_{4}$ \\
$\mathbf{8}$ & 4 & 1 & 4 & $[A,[[A,B],B]]$ & $\mu_{4}$ & $- \mu_{5}$ \\
\hline
$\mathbf{9}$ & 5 & 6 & 1 & $[[[A,[A,B]],A],A]$ & $- \mu_{6}$ & $0$ \\
$10$ & 5 & 7 & 2 & $[[[[A,B],B],B],B]$ & $0$ & $- \mu_{7}$ \\
$\mathbf{11}$ & 5 & 5 & 3 & $[[A,[A,B]],[A,B]]$ & $0$ & $- \mu_{6}$ \\
$\mathbf{12}$ & 5 & 3 & 4 & $[[A,B],[[A,B],B]]$ & $0$ & $- \mu_{8}$ \\
$13$ & 5 & 1 & 7 & $[A,[[[A,B],B],B]]$ & $\mu_{7}$ & $- \mu_{8}$ \\
$\mathbf{14}$ & 5 & 1 & 8 & $[A,[A,[[A,B],B]]]$ & $\mu_{8}$ & $\mu_{6}$ \\
\hline
$\mathbf{15}$ & 6 & 9 & 1 & $[[[[A,[A,B]],A],A],A]$ & $- \mu_{9}$ & $0$ \\
$\mathbf{16}$ & 6 & 11 & 1 & $[[[A,[A,B]],[A,B]],A]$ & $- \mu_{11}$ & $- 2\mu_{9}$ \\
$\mathbf{17}$ & 6 & 14 & 1 & $[[A,[A,[[A,B],B]]],A]$ & $- \mu_{14}$ & $\mu_{9}$ \\
$18$ & 6 & 10 & 2 & $[[[[[A,B],B],B],B],B]$ & $0$ & $- \mu_{10}$ \\
$\mathbf{19}$ & 6 & 8 & 3 & $[[A,[[A,B],B]],[A,B]]$ & $0$ & $- 2\mu_{11} + \mu_{14}$ \\
$20$ & 6 & 3 & 7 & $[[A,B],[[[A,B],B],B]]$ & $0$ & $- \mu_{12} - \mu_{13}$ \\
$21$ & 6 & 1 & 10 & $[A,[[[[A,B],B],B],B]]$ & $\mu_{10}$ & $- \mu_{13}$ \\
$\mathbf{22}$ & 6 & 1 & 12 & $[A,[[A,B],[[A,B],B]]]$ & $\mu_{12}$ & $- \mu_{11} - \mu_{14}$ \\
$23$ & 6 & 1 & 13 & $[A,[A,[[[A,B],B],B]]]$ & $\mu_{13}$ & $- \mu_{14}$ \\
\hline
$\mathbf{24}$ & 7 & 15 & 1 & $[[[[[A,[A,B]],A],A],A],A]$ & $- \mu_{15}$ & $0$ \\
$\mathbf{25}$ & 7 & 16 & 1 & $[[[[A,[A,B]],[A,B]],A],A]$ & $- \mu_{16}$ & $- 3\mu_{15}$ \\
$\mathbf{26}$ & 7 & 17 & 1 & $[[[A,[A,[[A,B],B]]],A],A]$ & $- \mu_{17}$ & $\mu_{15}$ \\
$27$ & 7 & 23 & 1 & $[[A,[A,[[[A,B],B],B]]],A]$ & $- \mu_{23}$ & $- \mu_{17}$ \\
$28$ & 7 & 18 & 2 & $[[[[[[A,B],B],B],B],B],B]$ & $0$ & $- \mu_{18}$ \\
$\mathbf{29}$ & 7 & 11 & 3 & $[[[A,[A,B]],[A,B]],[A,B]]$ & $0$ & $- \mu_{16}$ \\
$30$ & 7 & 13 & 3 & $[[A,[[[A,B],B],B]],[A,B]]$ & $0$ & $- \mu_{19} + \mu_{23}$ \\
$31$ & 7 & 7 & 4 & $[[[[A,B],B],B],[[A,B],B]]$ & $0$ & $\mu_{20}$ \\
$\mathbf{32}$ & 7 & 8 & 4 & $[[A,[[A,B],B]],[[A,B],B]]$ & $0$ & $- \mu_{19}$ \\
$\mathbf{33}$ & 7 & 6 & 5 & $[[[A,[A,B]],A],[A,[A,B]]]$ & $0$ & $- \mu_{15}$ \\
$\mathbf{34}$ & 7 & 5 & 8 & $[[A,[A,B]],[A,[[A,B],B]]]$ & $0$ & $- \mu_{17}$ \\
$35$ & 7 & 3 & 10 & $[[A,B],[[[[A,B],B],B],B]]$ & $0$ & $- \mu_{20} - \mu_{21}$ \\
$\mathbf{36}$ & 7 & 3 & 12 & $[[A,B],[[A,B],[[A,B],B]]]$ & $0$ & $\mu_{19} - \mu_{22}$ \\
$37$ & 7 & 1 & 18 & $[A,[[[[[A,B],B],B],B],B]]$ & $\mu_{18}$ & $- \mu_{21}$ \\
$\mathbf{38}$ & 7 & 1 & 19 & $[A,[[A,[[A,B],B]],[A,B]]]$ & $\mu_{19}$ & $2(\mu_{16} - \mu_{17})$ \\
$39$ & 7 & 1 & 20 & $[A,[[A,B],[[[A,B],B],B]]]$ & $\mu_{20}$ & $- \mu_{22} - \mu_{23}$ \\
$40$ & 7 & 1 & 21 & $[A,[A,[[[[A,B],B],B],B]]]$ & $\mu_{21}$ & $- \mu_{23}$ \\
$\mathbf{41}$ & 7 & 1 & 22 & $[A,[A,[[A,B],[[A,B],B]]]]$ & $\mu_{22}$ & $\mu_{16} + \mu_{17}$ \\
\hline
\end{tabular}
\caption{Basis elements $E_i=[E_{\ell(i)}, E_{r(i)}]$ of the free Lie algebra
$\mathcal{L}(A,B)$, degree $\mathrm{deg}(i)$, and coefficients $\alpha_i$
and $\beta_i$ in~\eqref{eq:adAadB} in terms of the coefficients $\mu_j$.
Bold indices identify elements that remain nonzero whenever
$[[[A,B],B],B]= 0$.}
\label{tb:basis}
\end{table}

The operators $Z_{k,n}$ in~\eqref{eq:Zkn} and $\mathcal{I}_{2p}(\sigma)$ given by \eqref{integ_1}
can now be expressed explicitly 
in terms of the elements $\{E_i\}$ described above.  
Since each $Z_{k,n}$ is a homogeneous Lie polynomial of degree $n$ (i.e., it is a linear combination of commutators containing
exactly $n$ operators $A$ and $B$), we may write
\[
   Z_{k,n-1} \;=\; \sum_{\deg(i)=n} \theta_{k,n,i}\, E_i,
\]
for uniquely determined real coefficients $\theta_{k,n,i}$ depending on the 
splitting coefficients $a_1,b_1,\ldots,a_s,b_s$. Likewise, the function $\mathcal{I}_{k,2p}(\sigma)$, with
the operators $C_k$ given by \eqref{B_kop}, 
is a homogeneous Lie polynomial of degree $2p+1$ in $A$ and $B$, and thus admits the expansion
\[
 \mathcal{I}_{k,2p}(\sigma)   \;=\;
   \sum_{\deg(i)=2p+1} \omega_{k,2p+1,i}(\sigma)\, E_i,
\]
for suitable coefficient functions $\omega_{k,n,i}(\sigma)$, $n=2p+1$.

With these representations, the error bound of 
Proposition~\ref{p:2operators} becomes
\begin{equation} \label{bound_explicit}
\begin{aligned}
   \|\Psi(h) - \e^{h(A+B)}\|
   &\;\le\; \frac{2}{2p+1}\, h^{2p+1}
      \sum_{k=2}^{2s+1} \int_0^1
      \left\|
         \sum_{\deg(i)=2p+1} \omega_{k,2p+1,i}(\sigma)\, E_i
      \right\| d\sigma \\
   &\;\le\;  h^{2p+1}
      \sum_{\deg(i)=2p+1}
      \left(\frac{2}{2p+1}\,
       \sum_{k=2}^{2s+1} 
        \int_0^1 |\omega_{k,2p+1i}(\sigma)|\, d\sigma
      \right)
      \|E_i\|,
\end{aligned}
\end{equation}
whereas the order conditions for a time-symmetric splitting method \eqref{2oper_sym1} of order $2p$
are simply
\begin{equation} \label{or_cond}
  \theta_{2s+1,n,i} = 0, \qquad n = 1, 3, \ldots, 2p-1, \qquad \deg(i) = n.
\end{equation}  
For any given sequence of real coefficients $a_1,b_1,\ldots,a_s,b_s$, 
each $\theta_{k,n,i} \in \mathbb{R}$ can be computed
recursively from~\eqref{eq:Zkn}. Such computations are straightforward for  
$n\le 7$ using~\eqref{eq:adAadB}  with the information provided in 
Table~\ref{tb:basis}).  
Furthermore, each $\omega_{k,n,i}(\sigma)$ is a real polynomial in $\sigma$ 
of degree at most $2p-1$ and can likewise be obtained recursively from \eqref{eq:Zkn} for $2p+1\leq 7$.  
For $n>7$, the coefficients $\theta_{k,n,i}$ and the real polynomials  $\omega_{k,n,i}(\sigma)$ can be computed recursively with an extended table of rules for the coefficients $\alpha_i$ and $\beta_i$ featuring in~\eqref{eq:adAadB}.  
Thus, in the \emph{Mathematica} notebook accompanying this article, and available at
\begin{center}
  \url{http://www.gicas.uji.es/Research/EBSplitting.html}
\end{center}  
 we include,
in addition to data analogous to Table~\ref{tb:basis} up to degree $11$,
a set of lightweight routines implementing the required recursive procedure.

The error bound \eqref{bound_explicit} has an additional important advantage over the more general 
bounds obtained in Section~\ref{sec.2}: it yields significantly sharper estimates in 
situations where certain commutators vanish for structural reasons.  Thus,
in many relevant applications one encounters the identity
\begin{equation} \label{rkn_1}
   [[[A,B],B],B] = 0.
\end{equation}
This is the case, for instance, of classical Hamiltonian systems whose kinetic energy is quadratic in momenta and the potential
only depends on coordinates, and also the time-dependent Schr\"odinger equation, to be discussed below.
In such cases, all basis elements $E_i$ involving this 
commutator or any of its descendants vanish identically, and therefore the 
corresponding contributions $\theta_{k,n,i}$ and $\omega_{k,n,i}(\sigma)$ do not 
appear in the local error expansion.  As a result, the bounds simplify 
substantially and become considerably sharper.

From an algebraic point of view,
the sequences of indices $\ell(i)$ and $r(i)$ used in Table~\ref{tb:basis} 
encode a basis of the free Lie algebra $\mathfrak{L}(A,B)$ generated by $A$ and $B$.  
More precisely, the elements listed in Table~\ref{tb:basis} constitute the 
initial segment, up to degree $\le 7$, of the basis for the free Lie algebra 
proposed in~\cite{mclachlan19tla}.  
This basis is a particular instance of a Hall--Viennot--Sirsov basis 
\cite{viennot78adl,reutenauer93fla}, constructed so as to enjoy a triangular recursive 
structure compatible with the adjoint actions~\eqref{eq:adAadB}.  
An additional useful feature of the basis introduced in~\cite{mclachlan19tla} is that it yields a natural direct 
sum decomposition of the free Lie algebra
\[
   \mathfrak{L}(A,B) \;=\; I \;\oplus\; \mathfrak{L}(A,B) / I,
\]
where $I$ is the Lie ideal generated by the element $[[[A,B],B],B]$.  
In other words, the basis elements split into
\begin{itemize}
   \item those that lie in $I$ (and hence vanish for operators satisfying 
         $[[[A,B],B],B]=0$), and
   \item those that project to nonzero elements in the quotient 
         $\mathfrak{L}(A,B)/I$.
\end{itemize}

The indices typeset in \textbf{boldface} in Table~\ref{tb:basis} identify 
precisely the elements of the basis of the quotient $\mathfrak{L}(A,B)/I$, 
i.e., the Lie monomials that remain nonzero whenever $[[[A,B],B],B]= 0$.  
The indices in normal (nonbold) type correspond to elements of the ideal $I$, 
and hence to commutators that vanish automatically in any setting where 
$[[[A,B],B],B]=0$.  

\subsubsection{Numerical illustration}

The estimate \eqref{bound_explicit} for the local error bound is illustrated next on some standard schemes applied to a system
split into two parts such that condition \eqref{rkn_1} holds.

As a first example
we take again the 6th-order time-symmetric composition  \eqref{sofro_spa} of order 6 and 13 stages
already considered  in section \ref{sec.3.4} with 
\[
    S_2(\gamma_j h) = \e^{\frac{h}{2} \gamma_j A} \, \e^{h \gamma_j B} \, \e^{\frac{h}{2} \gamma_j A}.
\]
By carrying out the necessary calculations we arrive at    
\[
\begin{aligned}
  & \|\Psi(h) - \e^{h(A+B)}\|  \le  10^{-5} \cdot h^7 \Big(   0.0865  \, \|E_{24}\| +  0.6488  \, \|E_{25}\| + 0.5591 \, \|E_{26}\| \\
      & \qquad\qquad  + \underline{1.4199 \, \|E_{27}\|} + \underline{0.2866 \, \|E_{28}\|}  + 0.4072  \, \|E_{29}\| + \underline{1.9947  \, \|E_{30}\|} \\
     & \qquad\qquad + \underline{1.1001 \,  \|E_{31}\|} + 0.8357 \, \|E_{32}\| + 0.1288 \, \|E_{33}\| + 0.6438  \, \|E_{34}\| \\
     & \qquad\qquad + \underline{2.002 \, \|E_{35}\|} + 1.6287 \, \|E_{36}\| + \underline{1.1208 \, \|E_{37}\|} +   2.6227 \, \|E_{38}\| \\
     & \qquad\qquad  + \underline{2.6531 \, \|E_{39}\|} + \underline{1.7839 \, \|E_{40}\|} +  0.7644  \, \|E_{41}\|  \Big).
\end{aligned}
\]     
\underline{Underlined} terms disappear from the error bound when the identity $  [[[A,B],B],B] = 0$ is satisfied.

Next we take the two well known Runge--Kutta--Nystr\"om splitting methods of order 4 and 6 presented in \cite[Table 3]{blanes02psp}
and recommended in \cite{blanes24smf} for their high efficiency. The error bounds are given, respectively, by
\[
   \|\Psi(h) - \e^{h(A+B)}\|  \le  10^{-3} \cdot h^5 \Big( 10.15 \, \|E_{9}\| + 1.04 \, \|E_{11}\| + 0.022 \, \|E_{12}\| 
 + 3.56 \, \|E_{14}\| \Big)
\]
and
\[
\begin{aligned}
  & \|\Psi(h) - \e^{h(A+B)}\|  \le  10^{-7} \cdot h^7 \Big(   0.13146  \|E_{24}\| +  0.46785  \|E_{25}\| + 0.63853 \|E_{26}\| \\
     & \qquad\qquad + 0.69042  \|E_{29}\| + 1.59218 \|E_{32}\| + 0.05566 \|E_{33}\| + 0.924511  \|E_{34}\| \\
     & \qquad\qquad + 3.85208  \|E_{36}\| + 3.81  \|E_{38}\| + 0.1649  \|E_{41}\|  \Big).
\end{aligned}
\]     
Observe that the individual terms in the error bound for this specially designed scheme attain significantly smaller numerical values than those corresponding to the composition \eqref{eq:palindromic_gamma}, in agreement with the behavior observed in practical integrations.

\subsection{Estimates for unbounded skew-adjoint operators}

The error bound provided by Proposition \ref{p:2operators} (or alternatively by eq. \eqref{bound_explicit})
has been obtained with the requirement that $A$ and $B$ are bounded. It turns out, however, that it can be extended (under suitable
general assumptions) to the case
when $A$ and possibly $B$ are unbounded 
skew-adjoint operators acting on
a Hilbert space $\mathcal{H}$. Thus, suppose we have a splitting method \eqref{2oper_sym1} which is of order $2p$ in the classical, bounded setting (i.e., it satisfies eq. \eqref{or_cond}) and that we want to bound the local error
 $\|\Psi(h) u_0 - \e^{h(A+B)} u_0 \| $ for some $u_0 \in \mathcal{H}$.
 We make
the following assumptions on $u_0$ and the operators $A$ and $B$:

\begin{itemize}
  \item \textbf{Assumption 1}. $A$, $B$, and $A+B$ are skew-adjoint operators on $\mathcal{H}$, and $u_0$ belongs
  to the domain of $A$, $B$ and $A+B$.
  \item \textbf{Assumption 2}. $G$ is a group of unitary operators on $\mathcal{H}$ containing $\e^{s A}$, $\e^{s B}$,
and $\e^{s (A+B)}$ for all $s \in \mathbb{R}$.
  \item \textbf{Assumption 3}. There exists a $G$-invariant dense subspace $\mathcal{D}(G)$  in $\mathcal{H}$ such that for $u_0 \in \mathcal{H}$ and for
all $g \in G$, then $g \, u_0 \in \mathcal{D}(G)$ and  the Lie elements $E_i$ with $\deg(i) = 2p+1$ 
are well-defined on $\mathcal{D}(G)$.
  \item \textbf{Assumption 4}. There exist constants $d_{i} \geq 0$ such that  $\sup_{g \in G} \| E_i \, g \, u_0\| \leq d_{i}$, with
  $\deg(i) = 2p+1$. 
\end{itemize}
From the proof of Lemma \eqref{l:Theta_M(t)} we have 
\[
  \|\Psi(h) u_0 - \e^{h(A+B)} u_0 \| \le \int_0^h \| (M(t) + M(-t)) \, \gamma u_0\| dt,
\]
with
\[
 \gamma = \Theta(-t) \, \e^{\frac{h}{2}(A+B)} \in G \qquad \mbox{ and } \qquad  
 M(t) + M(-t) =  t^{2p}  \big(R_{2s+1,2p}(t) + R_{2s+1,2p}(-t) \big).
\]
On the other hand,
\[
  \|R_{2s+1,2p}(t) \gamma u_0 \|   \le \sum_{k=2}^{2s+1} \left\| \sum_{n=1}^{2p} G_n(t, C_k, Z_{k-1,2p-n}) \gamma u_0 \right\| 
   \le  \sum_{k=2}^{2s+1} \int_0^1 \left\| \mathcal{I}_{k,2p}(\sigma) \, \gamma \, u_0 \right\| d\sigma,  
\]     
so that 
\begin{equation} \label{local_error_unb}
  \left\|\Psi(h) u_0 - \e^{h(A+B)} u_0 \right\| 
     \;\le\; 
     h^{2p+1}\,  \sum_{\deg(i)=2p+1}
      \left(
      \frac{2}{2p+1}\, 
       \sum_{k=2}^{2s+1} 
        \int_0^1 |\omega_{k,i}(\sigma)|\, d\sigma
      \right) d_i.
\end{equation}
%

\paragraph{Time-dependent Schr\"odinger equation.}

The time-dependent Schr\"odinger equation
\[
  i \psi(x,t) = -\frac{1}{2} \Delta \psi(x,t) + V(x) \psi(x,t), \qquad \psi(x,0) = \psi_0
\]
can be interpreted as an initial value problem of the form \eqref{equ_2op} by setting $u(t) = \psi(\cdot,t)$ and
\[
  (A w)(x) = \frac{i}{2} \Delta w(x), \qquad (Bw)(x) = -i V(x) w(x)
\]
for a a sufficiently regular function $w: \Omega \subset \mathbb{R}^n \longrightarrow \mathbb{C}$. In that case,
$\mathcal{H} = L^2(\Omega, \mathbb{C})$, $A$ is clearly an unbounded linear operator, the previous Assumptions 1 and 2
are satisfied and the Lie bracket
$[[[A,B],B],B] w(x) =0$ for all $w(x)$. If in addition the initial condition is such that Assumptions 3 and 4 hold, then the previous
bound \eqref{local_error_unb} applies, with elements $E_i$ whose subindex $i$ is typeset in boldface in Table \ref{tb:basis}, thus
leading to improved constants in the local error estimate.

\section{Concluding remarks}

In this work we have carried out a detailed error analysis of splitting and
composition methods applied to finite-dimensional unitary problems.
Explicit bounds have been derived both in terms of the norms of the
operators involved and in terms of the norms of their nested commutators.
We have considered splittings involving an arbitrary number of operators, and then 
we specialized the discussion to the case
of two operators and, in particular, to time-symmetric schemes. In this
setting, sharper bounds were obtained by exploiting a basis of the free
Lie algebra generated by the two operators, leading to tighter constants
and a more refined description of the structure of the local error.

Although we have assumed that
the operators are bounded, the formulation of the error bounds in terms
of commutators allows for extensions to situations where one of the
operators is unbounded, provided suitable domain and regularity
assumptions are satisfied. This framework is especially relevant for the
time-dependent Schr\"odinger equation, where the intrinsic algebraic and
unitary structure leads to improved constants in the corresponding error
estimates.

The results obtained here could be useful in several contexts. From the
perspective of numerical analysis of differential equations, they provide
a systematic tool for assessing the relative efficiency of different
splitting and composition methods. This feature has been illustrated here for a specific class of composition methods.
 The explicit characterization of the
error constants also offers guidance in the construction of new schemes,
allowing for the selection of coefficients that minimize dominant error
terms. In the context of quantum simulation, the bounds derived in this
work supply quantitative information and explicit values for the error
constants, which is essential for reliability estimates and resource
assessment in practical implementations.

Several directions for future research naturally emerge from this study.
A detailed treatment of problems involving genuinely unbounded operators
would require a careful analysis of domain invariance and regularity
conditions, and would extend the applicability of the present results to
a broader class of evolution equations. Another promising direction is
the investigation of splitting methods with complex coefficients in the
unitary and parabolic settings, where stability and structural properties
play a crucial role. Extensions to schemes involving commutators
explicitly, as well as to nonlinear problems, also constitute natural
continuations of this work. In the latter case, the Lie formalism for
ordinary differential equations provides a formal linear framework that
may serve as a starting point for a rigorous analysis. Finally, in the
area of quantum simulation, it would be of interest to apply the present
theoretical bounds to concrete models arising in practice, thereby
obtaining problem-dependent performance estimates and further insight
into the efficiency of splitting-based algorithms. All these issues require further study and will be addressed in a subsequent work.

\subsection*{Acknowledgements}
This work has been supported by 
Ministerio de Ciencia e Innovaci\'on (Spain) through projects  PID2022-136585NB-C21 and PID2022-136585NB-C22, \newline
MCIN/AEI/10.13039/501100011033/FEDER, UE. 

\bibliographystyle{agsm}

\end{document}